\newtheorem{thm}{Theorem}[section]
\newtheorem{lem}[thm]{Lemma}
\newtheorem{rem}[thm]{Remark}
\newtheorem{prop}[thm]{Proposition}
\theoremstyle{definition}
\newtheorem{defn}[thm]{Definition}
\def \a {\alpha}
\def \g {\gamma}
\def \tg {\tilde{\g}}
\def \d {\delta}
\def \e {\varepsilon}
\def \k {\kappa}
\def \l {\lambda}
\def \r {\rho}
\def \s {\sigma}
\def \E {\mathbb{E}}
\def \H {\mathbb{H}}
\def \R {\mathbb{R}}
\def \C {\mathcal{C}}
\def \Cov {\mathrm{Cov}}
\def \SLE {\mathrm{SLE}}
\def \sm {\setminus}
\def \Var {\mathrm{Var}}
\title{Natural parametrization of SLE: the Gaussian free field point of view }
\author{St\'ephane Benoist}
\begin{document}

\maketitle

\begin{abstract}
We provide another construction of the natural parametrization of SLE$_\kappa$ \cite{LawShe_NP,LawRez_MinkNatParam} for $\kappa < 4$. We construct it as the expectation of the quantum time \cite{QZIP}, which is a random measure carried by SLE in an ambient Gaussian free field. This quantum time was built as the push forward on the SLE curve of the Liouville boundary measure, which is a natural field-dependent measure supported on the boundary of the domain. We moreover show that the quantum time can be reconstructed as a chaos on any measure on the trace of SLE with the right Markovian covariance property. This provides another proof that natural parametrization is characterized by its Markovian covariance property.
\end{abstract}

\setcounter{tocdepth}{2}
\tableofcontents

\pagebreak

\begin {center}
{\sc Acknowledgements}
\end {center}

\noindent
Some of the ideas in this paper were developed thanks to a group reading of \cite{QZIP} held during the \emph{Random Geometry} semester at the Newton Institute. I want to thank all the participants of this reading group, and in particular Juhan Aru, Nathana{\"e}l Berestycki, Ewain Gwynne, Nina Holden and Xin Sun for the many fruitful lectures and discussions. I also thank Scott Sheffield for discussions that allowed me to move forward on this project.

\section{Introduction}

The SLE$_\kappa$ are natural random planar curves introduced by Schramm to describe scaling limits of critical interfaces in statistichal mechanics models. These curves have Hausdorff dimension $1+\kappa/8$ (for $0 < \k \leq 8$)  \cite{Bef_Haus} and the corresponding volume measure called natural parametrization has received some attention as the conjectural scaling limit of the length
of discrete interfaces (this is known for $\k=2$ \cite{LaVi_lerwnp}).  

Formally, the natural parametrization was first constructed and characterized as the increasing part of a certain submartingale for SLE \cite{LawShe_NP}, and later shown to be the Minkowski content of SLE \cite{LawRez_MinkNatParam}.

In this paper, we use ideas form Liouville Quantum Gravity (LQG) to provide another description of the natural parametrization for parameters $\kappa < 4$. We also use these methods to reprove a characterization of natural parametrization through its Markovian covariance property. These ideas would also apply to study the natural parametrization of SLE$_{\kappa'}$, for $\kappa'\in(4,8)$, as well as the natural volume measure on boundary touching points of an SLE$_\kappa(\rho)$ in the range of parameters $\kappa \in (0,4)$ and $\rho \in (-2, \kappa/2 -2)$, although we will not discuss this in detail here. Note that the LQG theory corresponding to $\kappa=4$ is critical, and the techniques of this paper do not work for SLE$_4$.

Our study of natural parametrization relies on objects and ideas introduced in \cite{QZIP}. LQG provides random volume measures in a perturbation of the flat metric given by a Gaussian free field, such that classical Euclidean volume measures can be recovered as the expectation of these LQG volume measures. We construct natural parametrization as such an expectation: direct computation shows that the expectation of a certain LQG volume measure, the quantum time, satisfies the axiomatic properties of the natural parametrization. The fact that natural parametrization is characterized by these axiomatic properties  is in turn proved by showing that any LQG measure built on a measure satisfying these axiomatic properties has to be equal to the natural LQG measure on SLE, the quantum time.

In Section \ref{sec:b}, we will provide some background on natural parametrization and on LQG. In Section \ref{sec:charac}, we use the free field to reprove a Markovian characterization of natural parametrization (Theorem \ref{thm:characnp}) by showing a relationship between natural parametrization and quantum time (Proposition \ref{prop:bl}). In Section \ref{sec:construction}, we construct the natural parametrization as an expectation of volume measures coming from the free field (Theorem \ref{thm:isnp}). Finally, we briefly explain in Section \ref{sec:other} how the methods of this paper could be adapted to other setups.

\section{Background}\label{sec:b}

\subsection{Schramm-Loewner evolutions}

Chordal Schramm-Loewner evolutions (SLEs) are a one parameter family of conformally invariant random curves defined in simply-connected domains of the complex plane, with prescribed starting point and endpoint on the boundary.

Let us first give the definition of $\SLE_\kappa$ in the upper half-plane $(\mathbb{H},0,\infty)$. It is a random curve $\eta : \R^+ \rightarrow \overline{\mathbb{H}}$, growing from the boundary point $0$ to $\infty$.

Suppose that such a curve $\eta$ is given to us. Let $H_s$ be the unbounded connected component of $\mathbb{H} \sm \eta([0,s])$, and consider the uniformizing map $g_s : H_s \rightarrow \mathbb{H}$, normalized at $\infty$ such that $g_s(z) = z + 2a_s/z + o(1/z)$. The quantity $a_s$ is the so-called half-plane capacity of the compact hull $K_s=\H\sm H_s$ generated by $\eta([0,s])$. Under additional assumptions\footnote{The curve $\eta$ needs to be instantaneously reflected off its past and the boundary in the following sense: the set of times $s$ larger than some time $s_0$ that $\eta$ spends outside of the domain $H_{s_0}$ should be of empty interior.}, the half-plane capacity $a_s$ is an increasing bijection of $\R^+$, and so we can reparametrize our curve by $t=a_s$.

With this parametrization, the family of functions $g_t$ solves the Loewner differential equation:
\begin{equation}\nonumber
\left\{\begin{array}{l}g_0(z) = z\\
\partial_t g_t(z) = \frac{2}{g_t(z)-W_{t}},\end{array}\right.
\end{equation}
where $W_t= g_t(\eta_t)$ is the (real-valued) driving function.

Conversely, starting from a continuous real-valued driving function, it is always possible to solve the Loewner equation, and hence to recover a family of compact sets $K_t$ in $\overline{\H}$, growing from $0$ to $\infty$, namely $K_t$ is the set of initial conditions $z$ that yield a solution $g_u(z$) blowing up before time $t$. It may happen that the compact sets $K_t$ coincides with the set of hulls generated by the trace of a curve $\eta$, which can in this case be recovered as $\eta_t= \lim_{\e\rightarrow 0} g_t^{-1}(W_t +i \e)$.
\begin{defn}\label{prop:SLE}
The process $\SLE_\kappa$ in ($\H,0,\infty$) is the curve obtained from the solution of the Loewner equation with driving function $W_t = \sqrt{\kappa} B_t$, where $B_t$ is a standard Brownian motion.
\end{defn}

The law of $\SLE_\kappa$ in ($\H,0,\infty$) is invariant by scaling. Hence, given a simply-connected domain $(D,a,b)$ with two marked points on its boundary, we can define the $\SLE_\kappa$ in  $(D,a,b)$ to be the image of an $\SLE_\kappa$ in  ($\H,0,\infty$) by any conformal bijection $(\H,0,\infty) \rightarrow (D,a,b)$.

We now restrict to values of the parameter $\kappa <4$. The SLE curves almost surely are simple curves of dimension $d=1+\frac{\kappa}{8}$ \cite{Bef_Haus}, and they carry a non-degenerate volume measure of dimension $d$.

\begin{defn}[\cite{LawRez_MinkNatParam}]
The $d$-dimensional Minkowski content $\mu$ of the trace of $\SLE_\k$ is a non-trivial measure. We call it the \emph{natural parametrization} of {\rm SLE}.
\end{defn}

\begin{rem}Given a conformal isomorphism $\phi:D\rightarrow\phi(D)$, the natural parametrization transforms via the natural covariance formula for $d$-dimensional measures: $\mu_{\phi(D)}\circ\phi=|\phi'|^{d} \mu_D$.
\end{rem}

SLE curves have the following spatial Markov property:
\begin{prop}\label{prop:markov-sle} Let $\eta$ be an $\SLE_\kappa$ in $ ($D,a,b$)$ and $\tau$ an almost surely finite stopping time. Then the law of the future of the curve $(\eta(u))_{u\geq \tau}$ conditioned on its past $(\eta(u))_{0\leq u \leq \tau}$ is that of an $\SLE_\kappa$ in $(D\setminus \eta([0,\tau]),\eta_\tau,b)$.
\end{prop}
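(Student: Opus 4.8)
The plan is to reduce everything to the reference configuration $(\H,0,\infty)$ and then to read the statement off the strong Markov property of Brownian motion together with a composition identity for Loewner flows. Since $\SLE_\kappa$ in an arbitrary $(D,a,b)$ is by definition the conformal image of $\SLE_\kappa$ in $(\H,0,\infty)$, and since $D\sm\eta([0,\tau])$ is again a simply connected domain with two marked boundary points $\eta_\tau$ and $b$ (for $\kappa<4$ the curve is simple, so its complement is connected and simply connected), it is enough to treat $(D,a,b)=(\H,0,\infty)$, where $D\sm\eta([0,\tau])=H_\tau$ is a slit half-plane. I would work throughout in the half-plane capacity parametrization, so that the Loewner equation holds with driving function $W_t=\sqrt{\kappa}\,B_t$, and take $\tau$ to be a stopping time for the filtration $(\F_t)$ generated by $\eta$; by the Rohde--Schramm theorem that $\SLE$ is generated by a curve, this filtration agrees up to completion with the one generated by $W$, so $\tau$ is also a stopping time of $W$.

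First I would invoke the strong Markov property of Brownian motion: $\widehat B_s:=B_{\tau+s}-B_\tau$ is a standard Brownian motion independent of $\F_\tau$, hence $\widehat W_s:=W_{\tau+s}-W_\tau=\sqrt{\kappa}\,\widehat B_s$ drives an $\SLE_\kappa$ curve $\widehat\eta$ in $(\H,0,\infty)$ that is independent of $\F_\tau$. The heart of the argument is to identify the future of $\eta$ with a conformal image of $\widehat\eta$. For $t\geq\tau$ put $h_t:=g_t\circ g_\tau^{-1}$; differentiating the Loewner equation gives $\partial_s h_{\tau+s}(z)=2/(h_{\tau+s}(z)-W_{\tau+s})$ with $h_\tau=\Id$, so $(h_{\tau+s})_{s\geq0}$ is a Loewner flow driven by $(W_{\tau+s})_s$. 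Recentering, $\widetilde h_s(z):=h_{\tau+s}(z+W_\tau)-W_\tau$ satisfies $\widetilde h_0=\Id$ and $\partial_s\widetilde h_s(z)=2/(\widetilde h_s(z)-\widehat W_s)$, i.e.\ $\widetilde h$ is exactly the Loewner flow of $\widehat W$, with curve $\widehat\eta$. Reading off traces, $\widehat\eta_s=\lim_{\e\to0}\widetilde h_s^{-1}(\widehat W_s+i\e)=\lim_{\e\to0}\big(g_\tau\big(g_{\tau+s}^{-1}(W_{\tau+s}+i\e)\big)-W_\tau\big)=g_\tau(\eta_{\tau+s})-W_\tau$, where the last equality uses continuity of $g_\tau$ up to the boundary to pass the limit inside. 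Writing $f_\tau:=g_\tau-W_\tau$, a conformal isomorphism $H_\tau\to\H$ sending $\eta_\tau$ to $0$ and fixing $\infty$, this says $\eta_{\tau+s}=f_\tau^{-1}(\widehat\eta_s)$.

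Then I would conclude: $f_\tau^{-1}$ is an $\F_\tau$-measurable conformal isomorphism $(\H,0,\infty)\to(H_\tau,\eta_\tau,\infty)$, and $\widehat\eta$ is an $\SLE_\kappa$ in $(\H,0,\infty)$ independent of $\F_\tau$; therefore, conditionally on $\F_\tau$ --- equivalently on the past $(\eta_u)_{u\leq\tau}$ --- the future $(\eta_{\tau+s})_{s\geq0}$ is the image of an independent $\SLE_\kappa$ under a fixed conformal map, which by the very definition of $\SLE$ in a general domain (using the scale invariance of $\SLE_\kappa$ in $(\H,0,\infty)$) is an $\SLE_\kappa$ in $(H_\tau,\eta_\tau,\infty)=(\H\sm\eta([0,\tau]),\eta_\tau,\infty)$, up to the harmless time shift $s=u-\tau$. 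Undoing the initial conformal reduction gives the claim for $(D,a,b)$.

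The composition identity and the resulting relation $\eta_{\tau+s}=f_\tau^{-1}(\widehat\eta_s)$ are short and essentially forced; I expect the real care to go into the measure-theoretic bookkeeping --- that $\tau$ is simultaneously a stopping time for the curve and for the driving function (this is exactly where Rohde--Schramm enters), and that $\widehat\eta$ is genuinely independent of $\F_\tau$ and not merely uncorrelated --- and into the regularity input needed to interchange $\lim_{\e\to0}$ with $g_\tau$, i.e.\ that $g_\tau$ extends continuously to the prime-end closure of $H_\tau$ near the point $\eta_{\tau+s}$. In the regime $\kappa\geq4$ one must also replace $\H\sm\eta([0,\tau])$ by its unbounded connected component, but no such adjustment is needed for the $\kappa<4$ case relevant here.
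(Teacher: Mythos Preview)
The paper states this proposition as background and does not supply a proof; it is quoted as a standard fact about $\SLE$. Your argument is correct and is the standard one: reduce to $(\H,0,\infty)$ by conformal covariance, apply the strong Markov property of the driving Brownian motion, and use the composition identity $g_{\tau+s}=\widetilde h_s\circ(g_\tau-W_\tau)+W_\tau$ for Loewner flows to identify the future curve with the conformal image of an independent $\SLE_\kappa$. The care you flag --- that the curve and driving filtrations agree (Rohde--Schramm), and the boundary continuity needed to pass the limit defining $\widehat\eta_s$ through $g_\tau$ --- is exactly where the technical content lies, and your handling of the $\kappa\geq 4$ caveat (replacing the complement by its unbounded component) is appropriate even though the paper immediately afterwards restricts to $\kappa<4$.
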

As the natural parametrization is a local deterministic function of SLE, the same Markov property holds for SLEs together with their natural parametrization.

We will state in Section \ref{sec:charnp} that this property actually characterizes the natural parametrization.

\subsection{The Gaussian free field}

Let us recall general facts about Gaussians, before defining the Gaussian free field.

\subsubsection{Gaussians}

Gaussians are usually associated to vector spaces carrying a non-degenerate scalar product. However, it is natural to extend this definition in the degenerate case, by saying that a centered Gaussian of variance $0$ is deterministically $0$.
\begin{defn}
The Gaussian on a vector space $V$ equipped with a symmetric positive semi-definite bilinear form $(\cdot,\cdot)$ is the joint data, for every vector $v\in V$, of a centered Gaussian random variable $\Gamma_v$, such that $v\mapsto\Gamma_v$ is linear, and such that for any couple $v,w\in V$, the covariance $\text{Cov}(\Gamma_v,\Gamma_w)=(v,w)$.
\end{defn}
Heuristically, one should think of $\Gamma_v$ as being the scalar product $(h,v)$ of $v$ with a random vector $h$ drawn according to the Gaussian law $e^{-\frac{1}{2}(h,h)}dh$. However, the linear form $v\rightarrow \Gamma_v$ is (in infinite-dimensional examples) a.s. not continuous, and so there does not exist a vector $h\in V$ such that $\Gamma_v=(h,v)$ for all $v\in V$. One can nonetheless try to find such a random object $h$ in a superspace of $V$: this is the question of finding a continuous version of Brownian motion, or of seeing the Gaussian free field as a distribution.

\subsubsection{Definition of the Gaussian free field}
We now fix a smooth and bounded simply-connected Jordan domain $D$ of the plane. Let us consider the degenerate Dirichlet scalar product
$$
(f,g)_\nabla=\frac{1}{2\pi}\int_D{\nabla f\nabla g}
$$
on the space $\C^{\infty}_\nabla\left(\overline{D}\right)$ of continuous functions $f$ on $\overline{D}$ that are smooth on $D$ and such that the Dirichlet norm $||f||_\nabla:=(f,f)_\nabla^{1/2}$ is finite.

Let $\C^{\infty,0}_\nabla\left(\overline{D}\right)$ be the subspace of $\C^{\infty}_\nabla\left(\overline{D}\right)$ that consists of functions vanishing on the boundary $\partial D$. The Dirichlet scalar product is non-degenerate on this subspace, and we denote by $H^0(D)$ its completion.

We define $H(D)$ to be the completion of the whole space $\C^{\infty}_\nabla\left(\overline{D}\right)$ with respect to the (non-degenerate) metric $||f||_\nabla+|f(x_0)|$, where $x_0\in D$ is an arbitrary point. The space $H(D)$ is naturally endowed with the (degenerate) Dirichlet scalar product

\begin{defn}
The Gaussian free field on $D$ with Dirichlet (resp. Neumann) boundary conditions is the Gaussian on the space $H^0(D)$ (resp $H(D)$) equipped with the (renormalized) Dirichlet scalar product $(\cdot,\cdot)_\nabla$.
\end{defn}
The free field is the joint data, for any function $f\in H^0(D)$ (resp $f\in H(D)$), of a random variable $\Gamma_f$.
The Dirichlet product being conformally invariant, so is the Gaussian free field with either boundary conditions. Hence, both these Gaussian free fields can be defined in any simply-connected domain of the complex plane.

\subsubsection{The Gaussian free field as a random distribution}

We would like to see the Gaussian free field as a random distribution, i.e. we want to be able to write
\begin{eqnarray}\label{eq:gff}
\Gamma_f=(h,f)_\nabla,
\end{eqnarray}
where $h$ is a random distribution. Hence, we are looking for a way that is consistent with (\ref{eq:gff}) to define a coupling of the quantities
\begin{eqnarray}
(h,g)=\int_D hg \nonumber
\end{eqnarray} for any smooth and compactly supported function $g$. Let $\Delta=\partial_x^2+\partial_y^2$ be the Laplacian, and $\partial_n$ denotes the outward normal derivative on the boundary. If $H$ is a regular enough distribution, and $f$ is a smooth enough function, then the Green formula holds:
\begin{equation}\label{eq:Green}
2\pi(H,f)_\nabla=-(H,\Delta f)+\int_{\partial D} H \partial_n f.
\end{equation}

Suppose we can solve the Poisson problem with Dirichlet boundary conditions for a certain smooth function $g$, i.e. suppose we can find a function $f\in H^0(D)$ such that 
\begin{equation}\nonumber
\Delta f =g\ {\rm on}\ D
\end{equation}

Then, for $h$ a Dirichlet free field (recall that, at least formally, $h=0$ on $\partial D$), we can tentatively define $(h,g)$ in agreement with Green's formula by
$$
(h,g):=-{2\pi}\Gamma_f.
$$

Similarly, suppose we can solve the Poisson problem with Neumann boundary conditions for a certain smooth function $g$, i.e. suppose we can find a function $f\in H(D)$ such that 
\begin{equation}\nonumber
\left\{
\begin{array}{l}\Delta f =g\ {\rm on}\ D\\
\partial_n f=0\ {\rm on}\ \partial D\nonumber.\end{array}\right.
\end{equation}
Then, for $h$ a Neumann free field, we can tentatively define $(h,g)$ in agreement with Green's formula by
$$
(h,g):=-{2\pi}\Gamma_f.
$$

We now recall a classical result of analysis.
\begin{prop}\label{prop:Poisson}
For any function $g$, the Poisson problem with Dirichlet boundary conditions admits a unique solution.

The Poisson problem with Neumann boundary conditions admits solutions if and only if the function $g$ satisfies the integral condition $\int_D g = 0$.
\end{prop}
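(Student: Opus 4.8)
The plan is to reduce each Poisson problem to its weak (variational) formulation on the relevant Hilbert space, solve that by the Riesz representation theorem, and then invoke elliptic regularity to recover a classical (smooth) solution. The guiding identity throughout is Green's formula \eqref{eq:Green}, which for a smooth $f$ and a test function $\phi$ reads $2\pi(f,\phi)_\nabla=-(\Delta f,\phi)+\int_{\partial D}\phi\,\partial_n f$, where $(\cdot,\cdot)$ denotes the $L^2$ pairing $\int_D\cdot\,\cdot$.

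For the Dirichlet problem I would test against $\phi\in H^0(D)$, so the boundary term vanishes and a solution of $\Delta f=g$ must satisfy $2\pi(f,\phi)_\nabla=-(g,\phi)$ for all $\phi\in H^0(D)$. I then solve this weak equation: the functional $\phi\mapsto-\tfrac1{2\pi}(g,\phi)$ is bounded on $H^0(D)$ because the Poincar\'e inequality (available since $D$ is bounded) gives $\|\phi\|_{L^2(D)}\le C\|\phi\|_\nabla$ there, so Riesz representation on the Hilbert space $(H^0(D),(\cdot,\cdot)_\nabla)$ produces a unique $f\in H^0(D)$. Uniqueness of the PDE solution within $H^0(D)$ follows by testing $\Delta f=0$ against $\phi=f$, which forces $\|f\|_\nabla=0$, hence $f$ constant, hence $f=0$ by the boundary condition.

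For the Neumann problem, necessity of $\int_D g=0$ is immediate from the divergence theorem: $\int_D g=\int_D\Delta f=\int_{\partial D}\partial_n f=0$. For sufficiency I would work on $\dot H(D)=\{f\in H(D):\int_D f=0\}$, on which the otherwise degenerate Dirichlet form becomes a genuine scalar product by the Poincar\'e--Wirtinger inequality. Green's formula shows a smooth solution satisfies $2\pi(f,\phi)_\nabla=-(g,\phi)$ for all $\phi\in H(D)$; crucially the right-hand side is insensitive to adding a constant to $\phi$ precisely when $\int_D g=0$, so it descends to a bounded functional on $\dot H(D)$ and Riesz produces $f\in\dot H(D)$. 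The weak identity then holds for all $\phi\in H(D)$, and — this is where the Neumann condition enters — comparing it with Green's formula using test functions that do not vanish on $\partial D$ forces $\partial_n f=0$. Uniqueness holds up to an additive constant by the same energy argument.

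The step I expect to be the main obstacle — and the only nontrivial analytic input — is elliptic regularity up to the boundary: upgrading the variational solution $f\in H^0(D)$ (resp.\ $f\in\dot H(D)$) to a smooth function on $\overline D$, which both yields a classical solution and legitimizes reading off the pointwise Neumann condition from the weak formulation. This is exactly where smoothness of $\partial D$ is used. Everything else is the standard Lax--Milgram/Riesz package together with the two Poincar\'e-type inequalities. As an alternative route, on such a regular domain one could instead exhibit the solutions explicitly via the Dirichlet Green's function and the Neumann function, the constraint $\int_D g=0$ then emerging as the compatibility condition required for the Neumann-function construction.
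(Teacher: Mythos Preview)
The paper does not actually prove this proposition: it is introduced with ``We now recall a classical result of analysis'' and stated without proof, so there is no argument in the paper to compare yours against. Your variational approach (weak formulation via Green's formula, Riesz representation on $H^0(D)$ respectively on the mean-zero subspace of $H(D)$ using Poincar\'e and Poincar\'e--Wirtinger, then elliptic regularity up to the boundary) is correct and is indeed the standard textbook proof; your identification of boundary regularity as the only substantial analytic ingredient is accurate, and the alternative route via Green's and Neumann functions that you mention is also standard.
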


In terms of finding a distributional representation $h$ of the Gaussian free field $\Gamma$, Proposition \ref{prop:Poisson} implies that the Dirichlet free field $h$ is canonically defined in the space of distributions. However, in the Neumann case, we only have a natural way to define the pairing $(h,g)$ for test functions $g$ such that $\int_D g =0$. In other words, the distribution $h$ representing the Neumann free field is canonically defined only in the space of distributions modulo constants.

We can make some choice to fix the constant for $h$. This is done by picking a function $\rho$ of non-zero mean, and by deciding that $(h,\rho)$ should have a certain joint distribution with the set of random variables $(h,g)$ where $g$ runs over all test functions satisfying the integral constraint $\int_D g =0$. However, none of these choices will preserve the conformal invariance of the field.

In the following we will always assume that some choice of constant for the Neumann free field has been made.

\subsubsection{Covariance}

\begin{defn}
The pointwise covariance of the field is the generalized function $K(x,y)$ that represents the bilinear form $(g,\tilde{g})\mapsto\E[(h,g)(h,\tilde{g})]$:
$$
\int_{z,w\in D} g(z)K(z,w)\tilde{g}(w):=\E[(h,g)(h,\tilde{g})].
$$
\end{defn}
In $\H$, we have that $K(z,w)=-\log|z-w|+\log|z-\overline{w}|$ for the Dirichlet free field, and for a Neumann field normalized such that $(h,\rho)=0$, we have that
$$
K(z,w)= G(z,w) - \int \rho(x)G(x,w){\rm d}x - \int G(z,y)\rho(y){\rm d}y + \int\int \rho(x)G(x,y)\rho(y){\rm d}x{\rm d}y,
$$
where $G(z,w)=-\log|z-w|-\log|z-\overline{w}|$.

\subsubsection{Markov property for the field}

Let us now state a spatial Markov property for the Dirichlet free field on a domain $D$.

Consider $K$ a hull of $D$, i.e. a subset of $D$ such that $D'=D\sm K$ is a simply-connected open domain. Note that a function $\C^{\infty,0}_\nabla\left(\overline{D'}\right)$ can be extended by $0$ on $K$ into a function belonging to $H^0(D)$. Hence the space of functions $H^0(D)$ splits as an orthogonal sum $H^{0}(D')\oplus F$. This translates into the following relationship between the Dirichlet Gaussian free fields $h$ and $h'$ on $D$ and $D'$.
\begin{prop}\label{prop:Markovfield}
The Gaussian free field $h$ splits as an independent sum $h=h'+C$, where the correction $C$ is a Gaussian field, of covariance $\Cov(h)-\Cov(h')$.
\end{prop}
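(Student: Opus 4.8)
The plan is to read off the statement from the orthogonal splitting $H^0(D)=H^0(D')\oplus F$ recalled just above, combined with the elementary principle that a Gaussian on a Hilbert space restricts to independent Gaussians on orthogonal subspaces.

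\textbf{Step 1 (split the Gaussian).} Let $\Gamma$ be the Gaussian on $H^0(D)$ underlying $h$. Every $f\in H^0(D)$ decomposes uniquely as $f=f'+f_F$ with $f'\in H^0(D')$ and $f_F\in F$, so $\Gamma_f=\Gamma_{f'}+\Gamma_{f_F}$ by linearity. The families $(\Gamma_{f'})_{f'\in H^0(D')}$ and $(\Gamma_{f_F})_{f_F\in F}$ are jointly Gaussian, and $\Cov(\Gamma_{f'},\Gamma_{f_F})=(f',f_F)_\nabla=0$ for all $f',f_F$; hence they are independent. On the subspace $H^0(D')\subset H^0(D)$ the ambient Dirichlet form restricts to $\frac{1}{2\pi}\int_{D'}\nabla\cdot\nabla\cdot$ (extended functions have vanishing gradient on $K$), which is exactly the renormalized Dirichlet form of $D'$, so $(\Gamma_{f'})_{f'\in H^0(D')}$ is by definition the Dirichlet Gaussian free field $h'$ on $D'$.

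\textbf{Step 2 (match the distributional pairings).} It remains to see that $(\Gamma_{f_F})_{f_F\in F}$ gives rise to a distribution $C$ with $h=h'+C$. I use the distributional description recalled above. Fix a smooth compactly supported $g$ on $D'$; solve the Dirichlet Poisson problem $\D f=g$ on $D$ with $f\in H^0(D)$ (this is possible and unique by Proposition \ref{prop:Poisson}), and decompose $f=f'+f_F$. The subspace $F$ consists of functions harmonic in $D'$: indeed $(f_F,\phi)_\nabla=0$ for all $\phi\in\C^{\infty,0}_\nabla(\overline{D'})$, and restricting to $\phi\in C^\infty_c(D')$ gives $\int_{D'}\nabla f_F\nabla\phi=0$, i.e. $\D f_F=0$ weakly on $D'$. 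Therefore $\D f'=g$ on $D'$ with $f'\in H^0(D')$, so by the uniqueness part of Proposition \ref{prop:Poisson} the function $f'$ is the solution of the Dirichlet Poisson problem for $g$ on $D'$. Consequently $(h',g)=-2\pi\Gamma_{f'}$, and defining $(C,g):=-2\pi\Gamma_{f_F}$ we get $(h,g)=(h',g)+(C,g)$ for every such $g$.

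\textbf{Step 3 (conclude).} By Step 1, $C$ — viewed as a random distribution through the pairings $(C,g)=-2\pi\Gamma_{f_F}$ — is a Gaussian field independent of $h'$ (its restriction to $D'$ is in fact a random harmonic function there). From $h=h'+C$ with $h'$ and $C$ independent we obtain $\Cov(h)=\Cov(h')+\Cov(C)$, that is, $\Cov(C)=\Cov(h)-\Cov(h')$, as claimed.

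The only delicate point is the bookkeeping in Step 2: one must check that the distributional pairing of $h$ against test functions on $D'$ really splits as the Hilbert-space decomposition predicts, which hinges precisely on the two facts that $F$ is made of functions harmonic in $D'$ and that the Dirichlet Poisson problem has a unique solution, so that the $H^0(D')$-component of the solution on $D$ is the solution on $D'$. A minor additional subtlety is that $D'$ need not be smooth, so $H^0(D')$ should be read via the completion procedure of the previous subsection rather than the "smooth up to the boundary" description; this does not affect the argument.
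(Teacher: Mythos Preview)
Your proposal is correct and follows exactly the approach the paper itself indicates: the paper does not give a formal proof of this proposition but simply states, in the sentence preceding it, that the orthogonal decomposition $H^0(D)=H^0(D')\oplus F$ ``translates into'' the claimed splitting; your Steps 1--3 are a careful and accurate unpacking of precisely that translation, including the identification of $F$ with functions harmonic on $D'$ (which the paper uses explicitly only in the proof of the next proposition).
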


Similarly, we have the following relationship between a Neumann and Dirichlet free field on a domain $D$.
\begin{prop}\label{prop:DN}
The Neumann free field on $D$ can be written as the independent sum of a Dirichlet free field on $D$ and a random harmonic function. 
\end{prop}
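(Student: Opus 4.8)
This proposition is the Gaussian shadow of an orthogonal decomposition of function spaces, entirely parallel to Proposition \ref{prop:Markovfield}. The plan is first to identify the $(\cdot,\cdot)_\nabla$-orthogonal complement of $H^0(D)$ inside $H(D)$. Let $\Ha(D)$ be the closure in $H(D)$ of the functions that are smooth on $\overline D$, harmonic in $D$, and of finite Dirichlet energy. For $f_0\in\C^{\infty,0}_\nabla(\overline D)$ and $f_h$ smooth and harmonic, Green's identity \eqref{eq:Green} gives $(f_0,f_h)_\nabla=\frac{1}{2\pi}\bigl(-(f_0,\Delta f_h)+\int_{\partial D}f_0\,\partial_n f_h\bigr)=0$, since $\Delta f_h=0$ in $D$ and $f_0=0$ on $\partial D$; passing to completions, $H^0(D)\perp\Ha(D)$. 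Conversely, any $f\in\C^{\infty}_\nabla(\overline D)$ splits as $f=f_0+f_h$, where $f_h$ is the harmonic extension of $f|_{\partial D}$ (smooth up to the boundary, since $\partial D$ is a smooth Jordan curve) and $f_0=f-f_h\in\C^{\infty,0}_\nabla(\overline D)$. Taking closures, $H(D)=H^0(D)\oplus\Ha(D)$ as an orthogonal direct sum, with the degeneracy of $(\cdot,\cdot)_\nabla$ on $H(D)$ — carried by the constants — sitting entirely inside the $\Ha(D)$ factor.

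Next I would pass from the Hilbert-space picture to the Gaussians, using the general principle, immediate from the definition of the Gaussian attached to a symmetric positive semi-definite form, that the Gaussian on an orthogonal direct sum $V=V_1\oplus V_2$ is the independent sum of the Gaussians on $V_1$ and on $V_2$: one sets $\Gamma_{v_1+v_2}:=\Gamma^{(1)}_{v_1}+\Gamma^{(2)}_{v_2}$ with the two families independent, checks linearity, and checks $\Cov(\Gamma_v,\Gamma_w)=(v_1,w_1)+(v_2,w_2)=(v,w)$ using orthogonality. Applied to $H(D)=H^0(D)\oplus\Ha(D)$, this writes the Neumann free field on $D$ as an independent sum of a Dirichlet free field on $D$ and a Gaussian $\Xi$ attached to $\Ha(D)$ (whose covariance is then the difference of the Neumann and Dirichlet covariances, exactly as in Proposition \ref{prop:Markovfield}).

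It then remains to recognize $\Xi$ as a genuine random harmonic function. Evaluation at an interior point $x\in D$ is a continuous linear functional on $\Ha(D)$: the interior gradient estimate for harmonic functions bounds $\sup_K|\nabla f|$ by a constant (depending on the compact $K\subset D$) times $\|f\|_\nabla$, and integrating $\nabla f$ along a compactly contained path from $x_0$ to $x$ then controls $f(x)-f(x_0)$, so $|f(x)|\leq C_K(\|f\|_\nabla+|f(x_0)|)$. Hence $\Xi$ defines a random field $(\Xi_x)_{x\in D}$, and since $\Ha(D)$ consists of harmonic functions and locally uniform limits of harmonic functions are harmonic, a suitable version of this field is almost surely harmonic. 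Like the Neumann field itself, this random harmonic function is only defined up to the additive constant fixed when normalizing the field.

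The linear-algebra core — the orthogonal splitting of $H(D)$ and its translation into an independent sum of Gaussians — is routine, as is the covariance bookkeeping. The step that requires genuine care, and the only place where analysis rather than algebra enters, is the last one: upgrading the abstract Gaussian $\Xi$ on $\Ha(D)$ to an honest random harmonic function, while keeping track throughout of the constant ambiguity inherent to the Neumann free field.
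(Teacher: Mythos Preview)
Your proof is correct and follows the same approach as the paper's: both rest on the orthogonal decomposition $H(D)=H^0(D)\oplus\{\text{harmonic functions}\}$ established via Green's formula \eqref{eq:Green}. The paper gives this as a one-line proof, while you have spelled out the details (the splitting of smooth functions, the passage to Gaussians on orthogonal summands, and the realization of the harmonic-space Gaussian as an actual random harmonic function), but the core idea is identical.
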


\begin{proof}
This follows from the fact that the orthogonal complement of $H^0(D)$ in $H(D)$ is the space of harmonic functions on $D$ (this can be seen via the Green formula (\ref{eq:Green}), .
\end{proof}

\subsection{Volume measures in a free field}

\subsubsection{Chaos measures}\label{sec:chaos}

From a field $h$ and a reference measure $\sigma$, one can build interesting random measures $e^{\tg h}\s$ called \emph{chaos measures}. The field $h$ is usually too irregular for its exponential to make sense, and so defining chaos requires some renormalization.

Let $\sigma$ be a Radon measure whose support has Hausdorff dimension at least $d$, and let $h=\tilde{h}+m$, where $m$ is a continuous function and $\tilde{h}$ is a Gaussian field with logarithmic correlations, i.e. such that there is some positive real number $n$ such that, for points $x$ belonging to a set of full measure for $\sigma$, the covariance $K$ of the field satisfies
$$
K(z,z+\d)=-n\log|\d| + O_{|\d|}(1).
$$
We can then build non-trivial \emph{chaos measures} $:\! e^{\tilde{\gamma}h}\s\! :$ for values of the parameter $0<\tg<\sqrt{2d/n}$ (see \cite{Ber_Chaos} and references therein).

In this paper, we will exclusively construct chaos on measures supported in the bulk of $\H$ for a field with $n=1$ logarithmic correlations or on measures supported on $\R$ for a field with $n=2$ logarithmic correlations (such as the Gaussian free field on $\H$ with Neumann boundary conditions). Let us define chaos measures in these two cases.

We fix a radially-symmetric smooth function $\theta_i^1$ of total mass $1$ supported in the unit ball around $i\in\H$, and let the bulk regularizing function be given by
$$
\theta_z^\e(w)=\e^{-2}\theta_i^1\left(\frac{w-z}{\e}+i\right).
$$
For $z\in\R$, we let the boundary regularizing function be
$$
\tilde{\theta}_x^\e(w)=2\theta_z^\e(w).
$$

\begin{defn}\label{def:chaos}
The $\tilde{\gamma}$-chaos of a Radon measure $\sigma$ with respect to a field $h$ with logarithmic correlations is defined in the following way.

For a measure $\s$ supported in the bulk $\H$, $n=1$ and $\tg<\sqrt{2d}$:
$$
:\! e^{\tilde{\gamma}h}\s\! :({\rm d}z) := \lim_{\e \rightarrow 0} e^{\tilde{\g}(\theta_z^\e,h)} \e^{\frac{\tilde{\g}^2}{2}} \s({\rm d}z),
$$
and for a measure $\sigma$ supported on the boundary $\R$, $n=2$, and $\tg<\sqrt{d}$:
$$
:\! e^{\tilde{\gamma}h}\s\! :({\rm d}x) := \lim_{\e \rightarrow 0} e^{\tilde{\g}(\tilde{\theta}_x^\e,h)} \e^{\tilde{\g}^2} \s({\rm d}x).
$$
\end{defn}

We can recover the reference measure $\sigma$ from its chaos either by reversing the procedure used to construct chaos, or by averaging on the field.

\begin{prop}\label{prop:recovc}
For a measure $\s$ supported on the bulk,
$$
\lim_{\e \rightarrow 0} e^{-\tilde{\g}(\theta_z^\e,h)} \e^{-\frac{\tilde{\g}^2}{2}}:\! e^{\tilde{\gamma}h}\s\! :({\rm d}z) :=  \s({\rm d}z).
$$
\end{prop}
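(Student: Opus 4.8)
The plan is to test the claimed identity against an arbitrary continuous compactly supported $f$ on $\H$ and prove that, writing $h_\e(z):=(\theta_z^\e,h)$,
\[
X_\e := \int_\H f(z)\, e^{-\tg h_\e(z)}\, \e^{-\tg^2/2}\, :\!e^{\tg h}\s\!:(\dis z) \longrightarrow \int_\H f\,\dis\s
\]
in probability as $\e\to 0$. The conceptual point I would exploit is that \emph{the recovery procedure strips the field of its scales coarser than $\e$}. Concretely: $:\!e^{\tg h}\s\!:$ is the a.s.\ weak limit, as $\e'\to 0$, of the regularized measures $e^{\tg h_{\e'}(z)}(\e')^{\tg^2/2}\s(\dis z)$, and the weight $z\mapsto f(z)e^{-\tg h_\e(z)}\e^{-\tg^2/2}$ is continuous and compactly supported, so it may be pushed through the weak limit:
\[
X_\e = \lim_{\e'\to 0}\int_\H f(z)\, e^{\tg\left(h_{\e'}(z) - h_\e(z)\right)}\, (\e'/\e)^{\tg^2/2}\,\s(\dis z).
\]
When $\e'\ll\e$, the Gaussian $h_{\e'}(z)-h_\e(z)$ is, up to a negligible error, a scale-$\e'$ mollification of the \emph{fine field} $\tilde h^{(\e)}:=h-h_\e$, which is still logarithmically correlated at scales $\ll\e$ with the same constant $n=1$ (so its chaos exists for $\tg<\sqrt{2d}$, exactly as for $h$), and whose variance is $\log(\e/\e')+o(1)$, so that the prefactor $(\e'/\e)^{\tg^2/2}$ is precisely the normalization prescribed by Definition~\ref{def:chaos} for that field. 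By mollifier-independence of chaos I would conclude
\[
e^{-\tg h_\e(z)}\,\e^{-\tg^2/2}\, :\!e^{\tg h}\s\!:(\dis z) \;=\; :\!e^{\tg \tilde h^{(\e)}}\s\!:(\dis z),
\]
reducing the proposition to the statement that the fine-field chaos converges weakly to $\s$ as $\e\to 0$.

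For the latter I would run a first/second moment argument. The first moment of a chaos is the reference measure (this is the normalization built into Definition~\ref{def:chaos}, equivalently the fact recalled just before the statement that averaging a chaos over the field returns $\s$), so $\E[X_\e]=\int f\,\dis\s$ for every $\e$. For the second moment, the chaos covariance formula gives
\[
\E\big[X_\e^2\big] = \int_\H\!\int_\H f(z)f(z')\, e^{\tg^2 \tilde K^{(\e)}(z,z')}\,\s(\dis z)\,\s(\dis z'),
\]
where $\tilde K^{(\e)}$ is the covariance of $\tilde h^{(\e)}$: one checks that $\tilde K^{(\e)}(z,z')$ is comparable to $\big(\log(\e/|z-z'|)\big)^+$, hence tends to $0$ pointwise off the diagonal as $\e\to 0$ while staying bounded above, uniformly in $\e\le 1$, by $-\log|z-z'|+O(1)$ on $\mathrm{supp}\,f\times\mathrm{supp}\,f$. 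Since $\s$ has dimension $d$, the function $f(z)f(z')\big(|z-z'|^{-\tg^2}\vee 1\big)$ is $\s\otimes\s$-integrable as soon as $\tg^2<d$, so dominated convergence yields $\E[X_\e^2]\to\big(\int f\,\dis\s\big)^2$, i.e.\ $\Var(X_\e)\to 0$ and $X_\e\to\int f\,\dis\s$ in $L^2$. This settles the range $\tg<\sqrt d$. For the full range $\tg<\sqrt{2d}$ the $L^2$ computation fails near the diagonal, and I would instead carry it out on a truncation of the fine field (Kahane's argument: restrict to a good event on which a suitable coarse-field modulus is bounded, prove $L^2$-convergence there, then remove the truncation using that the chaos is a uniformly integrable limit), exactly as in the construction of chaos in \cite{Ber_Chaos}; along the dyadic scales $\e=2^{-k}$ the bound is summable, so the convergence can even be upgraded to almost sure.

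I expect the main obstacle to be the reinterpretation step: identifying $e^{-\tg h_\e(z)}\e^{-\tg^2/2}:\!e^{\tg h}\s\!:$ with $:\!e^{\tg\tilde h^{(\e)}}\s\!:$ requires controlling, uniformly in $z$ on $\mathrm{supp}\,f$, both the discrepancy between $h_{\e'}(z)-h_\e(z)$ and an honest scale-$\e'$ mollification of $\tilde h^{(\e)}$, and the discrepancy between its variance and $\log(\e/\e')$ --- the usual (routine but not free) estimates underlying mollifier-independence of chaos. An alternative that bypasses the reinterpretation is to compute $\E[X_\e]$ and $\E[X_\e^2]$ directly from the Cameron--Martin/Girsanov description of chaos --- rooting a chaos at a point $z$ shifts the underlying field by $\tg K(\cdot,z)$ --- at the cost of carrying along the constants produced by $(\theta_z^\e,K(\cdot,z))$ and $\Var h_\e(z)$, which cancel precisely because of the normalization chosen in Definition~\ref{def:chaos}.
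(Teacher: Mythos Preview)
The paper does not prove this proposition: it is stated without argument, and the proof block that immediately follows it is for Proposition~\ref{prop:recove}, whose ``first equality'' (the interchange of $\E$ and $\lim_{\e\to 0}$) is what the uniform integrability reference to \cite{Ber_Chaos} is justifying. So you are supplying strictly more than the paper does.

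Your approach is sound and is the natural one. Rewriting $e^{-\tg h_\e}\e^{-\tg^2/2}:\!e^{\tg h}\s\!:$ as (up to negligible corrections) the chaos of the fine field $h-h_\e$, and then running a first/second-moment argument to show that this fine-field chaos collapses to $\s$ as $\e\to 0$, is exactly how such ``inverse chaos'' statements are handled in the multiplicative-chaos literature. For the only application in this paper (the proof of Theorem~\ref{thm:characnp}, where $\tg=\g/2$ and $\s=\mu$ is supported on an SLE of dimension $d=1+\k/8$), one has $\tg^2=\k/4<d$, so your $L^2$ computation already suffices and the Kahane truncation for the full range $\tg<\sqrt{2d}$ is not needed here.

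Two small points worth tightening. First, the assertion $\E[X_\e]=\int f\,d\s$ is not immediate \emph{before} the fine-field reinterpretation: $X_\e$ is a product of two $h$-dependent factors, and one must either pass through the identification $e^{-\tg h_\e}\e^{-\tg^2/2}:\!e^{\tg h}\s\!:\;=\;:\!e^{\tg(h-h_\e)}\s\!:$ first, or compute directly via the rooted-measure/Girsanov description you sketch at the end. Second, that very identification genuinely rests on the mollifier-independence of chaos (so that the slightly mismatched approximants $h_{\e'}(z)-h_\e(z)$ and $(\theta^{\e'}_z,h-h_\e)$ produce the same limiting measure); you correctly flagged this as the place where the real work hides, and it is.
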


\begin{prop}\label{prop:recove}
Let $h$ be a Gaussian free field, and $\s$ a Radon measure supported in the bulk. We can recover the measure $\s$ from its chaos $:\! e^{\tilde{\gamma}h}\s\! :$, as
$$
\s(dz)=e^{-\frac{\tilde{\g}^2}{2} \widehat{K}(z)}\E\left[:\! e^{\tilde{\gamma}h}\s\! :(dz)\right],
$$
where $\widehat{K}(z) = \lim_{\e \rightarrow 0} \Var(\theta_z^\e,h) +\log\e$.

\end{prop}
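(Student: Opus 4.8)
The statement is precisely the first-moment (``expectation'') formula for the Gaussian multiplicative chaos $:\!e^{\tg h}\s\!:$, so the plan is to compute $\E\left[:\!e^{\tg h}\s\!:\right]$ directly from the defining limit in Definition \ref{def:chaos} and to recognize the answer as $e^{\frac{\tg^2}{2}\widehat K(z)}\,\s(dz)$; dividing by $e^{\frac{\tg^2}{2}\widehat K(z)}$ then yields the claim. Fix a bounded nonnegative continuous test function $\phi$ supported in the bulk (the case of general Borel $\phi$ follows afterwards, since both sides are deterministic measures). By the construction of chaos recalled in Section \ref{sec:chaos} — the regularized approximations converge to $:\!e^{\tg h}\s\!:$ in $L^1$ when tested against $\phi$, see \cite{Ber_Chaos} — we may exchange expectation and the limit $\e\to 0$:
$$
\E\left[\int \phi\, d\!\left(:\!e^{\tg h}\s\!:\right)\right]=\lim_{\e\to 0}\ \E\left[\int \phi(z)\, e^{\tg(\theta_z^\e,h)}\,\e^{\frac{\tg^2}{2}}\,\s(dz)\right].
$$

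For fixed $\e$, the random variable $(\theta_z^\e,h)$ is a centered Gaussian with finite variance $\Var(\theta_z^\e,h)=\iint \theta_z^\e(v)\,K(v,w)\,\theta_z^\e(w)\,dv\,dw$, so $\E\left[e^{\tg(\theta_z^\e,h)}\right]=e^{\frac{\tg^2}{2}\Var(\theta_z^\e,h)}$. Since the integrand is nonnegative, Tonelli's theorem lets us pull the expectation inside the $z$-integral, giving
$$
\E\left[\int \phi(z)\, e^{\tg(\theta_z^\e,h)}\,\e^{\frac{\tg^2}{2}}\,\s(dz)\right]=\int \phi(z)\, e^{\frac{\tg^2}{2}\left(\Var(\theta_z^\e,h)+\log\e\right)}\,\s(dz).
$$
Here we used $\e^{\frac{\tg^2}{2}}=e^{\frac{\tg^2}{2}\log\e}$ to absorb the normalizing power of $\e$ into the exponent.

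It remains to pass to the limit $\e\to 0$ inside this last integral. Because the covariance of the bulk free field (Dirichlet or Neumann) has the form $K(v,w)=-\log|v-w|+r(v,w)$ with $r$ continuous, one checks that $\Var(\theta_z^\e,h)+\log\e$ converges, locally uniformly in $z$, to $\widehat K(z)=\lim_{\e\to 0}\Var(\theta_z^\e,h)+\log\e$; in particular it stays bounded, uniformly in $\e$, on the compact support of $\phi$. Dominated convergence then gives $\lim_{\e\to 0}\int \phi(z)\, e^{\frac{\tg^2}{2}(\Var(\theta_z^\e,h)+\log\e)}\,\s(dz)=\int \phi(z)\, e^{\frac{\tg^2}{2}\widehat K(z)}\,\s(dz)$, that is, $\E\left[:\!e^{\tg h}\s\!:(dz)\right]=e^{\frac{\tg^2}{2}\widehat K(z)}\,\s(dz)$, which rearranges to the asserted identity. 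The only genuinely delicate points are the two interchanges of limit and integral: the first rests on the $L^1$-convergence built into the definition of chaos in the subcritical range $\tg<\sqrt{2d}$, and the second on the local boundedness of $\widehat K$ on the bulk.
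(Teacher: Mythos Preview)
Your proof is correct and follows essentially the same route as the paper: both exchange expectation with the $\e\to 0$ limit (the paper invokes uniform integrability, you invoke the equivalent $L^1$-convergence of the approximations), compute the Gaussian exponential moment $\E[e^{\tg(\theta_z^\e,h)}]=e^{\frac{\tg^2}{2}\Var(\theta_z^\e,h)}$, and then use the definition of $\widehat K$ to identify the limit. Your version is slightly more explicit in testing against a function $\phi$ and in justifying the final dominated-convergence step, but the argument is the same.
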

\begin{proof}
The first equality holds by uniform integrability (see \cite[Section 2.3]{Ber_Chaos}).
\begin{eqnarray}
\E\left[:\! e^{\tilde{\gamma}h}\s\! :(dz)\right]&=&\lim_{\e \rightarrow 0}\E\left[e^{\tilde{\g}(\theta_z^\e,h)} \e^{\frac{\tilde{\g}^2}{2}} \s(dz)\right]\nonumber\\
&=&\lim_{\e \rightarrow 0} \E\left[e^{\tilde{\g}(\theta_z^\e,h)}\right] \e^{\frac{\tilde{\g}^2}{2}} \s(dz)\nonumber\\
&=&\lim_{\e \rightarrow 0} e^{\frac{\tilde{\g}^2}{2}\Var(\theta_z^\e,h)} \e^{\frac{\tilde{\g}^2}{2}} \s(dz) \nonumber\\
&=&\lim_{\e \rightarrow 0} e^{\frac{\tilde{\g}^2}{2} (-\log\e + \widehat{K}(z) +o_\e(1))} \e^{\frac{\tilde{\g}^2}{2}} \s(dz) \nonumber\\
&=& e^{\frac{\tilde{\g}^2}{2} \widehat{K}(z)} \s(dz). \nonumber
\end{eqnarray}
\end{proof}

\subsubsection{Liouville quantum gravity}
 
The goal of Liouville quantum gravity (LQG) is the study of \emph{quantum surfaces}, i.e. of complex domains $D$ carrying a natural random metric - the Liouville metric. This Liouville metric is of the form $e^{\gamma h}g$ where $g$ is some metric compatible with the complex structure, and $h$ is a field related to the Gaussian free field on $D$. It is actually unclear how to properly define this random metric (except in the case $\gamma=\sqrt{8/3}$, see \cite{MiSh_LQGTBMI}). We can however build as chaos measures certain Hausdorff volume measures of the Liouville metric.

Let us stress that the object of interest is the Liouville metric, and its canonical volume measures. The field $h$ only appears as a tool to construct these objects in fixed coordinates, and hence should change appropriately when we change coordinates, so that the geometric objects are left unchanged. 
\begin{defn}The Liouville coordinate change formula is given by:
$$
h_{\phi(D)}=h_D\circ \phi^{-1} + Q\log |{\phi^{-1}}'|,
$$
where $Q=\frac{\g}{2}+\frac{2}{\g}$. 
\end{defn}
Natural volume measures are then invariant under this change of coordinates (see Proposition \ref{prop:b}).

We call \emph{quantum surface} a class of field-carrying complex domains $(D,h)$ modulo Liouville changes of coordinates. A particular representative $(D,h)$ of a given quantum surface is called a \emph{parametrization}.

\subsubsection{The radial part of the free field}

Let us consider a Neumann free field $h$ in the upper half-plane $\H$. Note that the Dirichlet space $H(\H)$ admits an orthogonal decomposition $H^r \oplus H^m$ for the Dirichlet product, where $H^r$ is the closure of radially symmetric functions $f(|\cdot|)$, and $H^m$ is the closure of functions that are of mean zero on every half-circle $C_R=\{z\in\H, |z|=R\}$. The field $h$ correspondingly splits as a sum $h^r+h^m$. If one fixes the constant of the field by requiring that $h^r(1)=0$, the components $h^r$ and $h^m$ are independent.

The law of the radial component $h^r(e^{-\frac{t}{2}})$ can be explicitly computed: it is a double-sided Brownian motion (see the related \cite[Proposition 3.3]{DuSh_LQGKPZ}), i.e. the functions $\left(h^r(e^{-\frac{t}{2}})\right)_{t\geq 0}$ and $\left(h^r(e^{\frac{t}{2}})\right)_{t\geq 0}$ are independent standard Brownian motions.
Moreover, note that adding a drift $a t$ to $h^r(e^{-t})$ corresponds to adding the function $-a \log|\cdot|$ to the field.

\subsubsection{The wedge field}

We now define, in the upper half plane $\H$, an object closely related to the Neumann free field: the \emph{wedge field}.
Let us fix some real number $\a<Q=\frac{\g}{2}+\frac{2}{\g}$.
\begin{defn}\label{def:A}
The $\a$-wedge field is a random distribution that splits in $H^r \oplus H^m$ as an independent sum $h_W=h^r_W+h^m$, where $h^m$ is as for the Neumann free field, and $h^r_W(e^{-t})$ has the law of $A_t$, as defined below.
\end{defn}
For $t>0$, $A_t=B_{2t}+\a t$, where $B$ is a standard Brownian motion started from $0$.
For $t<0$, $A_t=\widehat{B}_{-2t}+\a t$, where $\widehat{B}$ is a standard Brownian motion started from $0$ independent of $B$ and conditioned on the singular event
$$
A_t-Qt=\widehat{B}_{-2t}+ (\a-Q) t > 0
$$
for all negative times t.

\begin{rem}[{\cite[Proposition 4.6.(i)]{MatingTrees}}]\label{rem:invwed} 
The law of an $\alpha$-wedge field is invariant as a quantum surface (i.e. up to a Liouville change of coordinates) when one adds a deterministic constant to the wedge field.
\end{rem}

\begin{rem}[{\cite[Proposition 4.6.(ii)]{MatingTrees}}]
The wedge field in $\H$ appears as a scaling limit of a Neumann free field when zooming in at the origin. Moreover, wedge fields are absolutely continuous with respect to the Gaussian free field on compact subsets of $\overline{\H}\setminus\{0\}$. In particular, this local absolute continuity allows one to build chaos measures for wedge fields. 
\end{rem}

\subsection{The unzipping operation}

We will be cutting surfaces open along curves drawn on them.

Let $(\eta(t))_{t\geq 0}$ be a simple curve growing in $\H$ from $0$ to $\infty$ (for example, $\eta$ is an $\SLE_\k$, with $\k < 4$). For a positive time $t$, let $\phi^t_0:\H\sm \eta([0,t])\rightarrow\H$ be the uniformizing map normalized so that $\phi^t_0(z) = z+O_{z\to\infty}(1)$ and $\phi^t_0(\eta^0(t))=0$, and let $\phi_t^0$ be its inverse.

The zipping and unzipping operators are the family of conformal maps parametrized by $s,t\geq 0$ such that $\phi_s^t=\phi_0^t\circ\phi_s^0$.

For any time $t\geq 0$, the curve unzipped by $t$ units of time is given by
$$
\left(\eta^t(u)\right)_{u\geq 0}:=\left(\phi^t_0(\eta(t+u))\right)_{u\geq 0}.
$$

If the curve $\eta$ comes with a volume measure $\mu$ of dimension $d$, the corresponding volume measure on the unzipped curve $\eta^t$ is given by 
\begin{eqnarray}\label{eq:unzmes}
\mu^t:= |{\phi_t^0}'|^{-d} \ \mu \circ\phi_t^0
\end{eqnarray}

Finally, if $(\H,h,\eta)$ is a quantum surface with a simple curve drawn on it (see Figure \ref{fig:zipper}), the unzipped field $h^t$ at time $t$ is given by the Liouville change of coordinates
$$
h^t:=h\circ \phi^0_t + Q\log |{\phi^0_t}'|.
$$

\begin{figure}
\centering
\includegraphics[width=12cm]{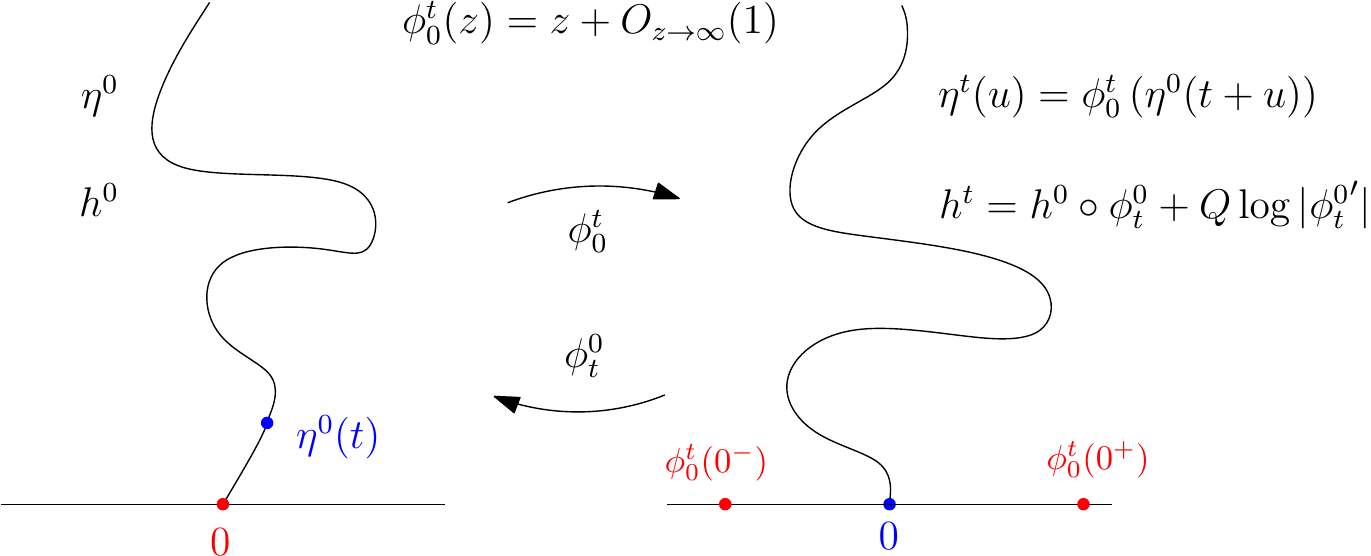}
\caption{Zipping up and unzipping a quantum surface.}
\label{fig:zipper}
\end{figure}

\subsection{Volume measures and unzipping}

\subsubsection{Characterization of natural parametrization.}\label{sec:charnp}

Let $\eta$ be an $\SLE_\kappa$, for $\kappa < 4$. We treat measures on $\eta$ as volume measures of dimension $d=1+\kappa/8$, in the sense that we use (\ref{eq:unzmes}) to push them through the unzipping operation.

\begin{thm}[\cite{LawShe_NP}]\label{thm:characnp}
Let $\mu$ be a locally finite measure on $\eta((0,\infty))$. 

Suppose that the following holds: for any time $t>0$, conditionnaly on $\eta([0,t])$, the couple $(\eta^t, \mu^t)$ has the law of $(\eta,\mu)$.

Then the measure $\mu$ is uniquely determined up to a deterministic multiplicative constant.
\end{thm}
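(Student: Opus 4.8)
The plan is to derive strong constraints on the measure $\mu$ from the stationarity hypothesis under unzipping, and to bootstrap them into the conclusion that the one-dimensional family of measures $\{\mu^t\}_{t\geq 0}$ (equivalently $\mu$ itself) is rigid. First I would encode the hypothesis as a martingale/flow statement: write $\mu_t := \mu\big(\eta([0,t])\big)$ for the mass assigned to the curve unzipped up to capacity time $t$, or better, fix a reference Borel set and track the evolution of its $\mu^t$-mass under the Loewner flow. The conformal covariance rule \eqref{eq:unzmes}, $\mu^t = |{\phi_t^0}'|^{-d}\,\mu\circ\phi_t^0$, together with the spatial Markov property of $\SLE_\kappa$ (Proposition \ref{prop:markov-sle}) and the assumed invariance of the pair $(\eta^t,\mu^t)$, forces the ``mass process'' to have stationary and (conditionally) independent-type increments along the zipper, once one quotients out the deterministic conformal distortion factors $|{\phi_t^0}'|^{-d}$.

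Next I would localize near the tip. The key object is $M_t := \mathbb{E}\big[\,\mu(\eta([0,1]))\,\big|\,\eta([0,t])\,\big]$ for $t\in[0,1]$ (or a localized version using a bounded test function supported near $\eta_0$): by the tower property this is a martingale, and by the unzipping stationarity combined with the scaling invariance of $\SLE_\kappa$ in $(\H,0,\infty)$ it must be self-similar under the Loewner scaling $z\mapsto\lambda z$, $t\mapsto\lambda^2 t$, with the dimension-$d$ weight $\lambda^d$. The Loewner equation gives $\partial_t|{\phi_t^0}'(\eta_t)|$ and the behaviour of $g_t'$ near $W_t=\sqrt{\kappa}B_t$ explicitly; feeding the SLE martingale observables (the standard $g_t'(\cdot)^{\lambda}|g_t(\cdot)-W_t|^{\mu}$-type local martingales, with the exponents tuned to $d=1+\kappa/8$) into the martingale $M_t$ and using Itô's formula should show that the drift vanishes only for one choice of normalization, so that $M_t$ is, up to a multiplicative constant, a fixed explicit process. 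This pins down $\mathbb{E}[\mu(\cdot)]$ on all curve-segments, hence the one-point intensity of $\mu$.

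Having fixed the first moment (the expected measure, which will be recognizably proportional to the natural parametrization / Minkowski content), I would upgrade from expectation to almost-sure equality. Here the idea is to apply the same stationarity hypothesis simultaneously from many points along the curve: unzip by $t$, use that $(\eta^t,\mu^t)\stackrel{d}{=}(\eta,\mu)$ conditionally, and let $t$ range over a countable dense set. This exhibits $\mu$ as a measure all of whose ``conditional local intensities'' agree with those of the natural parametrization; a density/covering argument — approximating $\mu$-mass of a Borel set by sums of conditional expectations of small curve-segments, each of which is now deterministic given the curve — then forces $\mu$ to equal $c\,\mu_{\mathrm{NP}}$ almost surely, with $c$ the deterministic constant $\mathbb{E}[\mu(\eta([0,1]))]/\mathbb{E}[\mu_{\mathrm{NP}}(\eta([0,1]))]$. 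One must check that $c$ cannot depend on the starting configuration, which again follows from conformal invariance and the Markov property applied at a stopping time.

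The main obstacle I expect is the passage from the first-moment identification to the almost-sure one: the stationarity hypothesis is only a distributional statement about $(\eta^t,\mu^t)$, so controlling the fluctuations of $\mu$ around its mean requires either a second-moment computation (showing the variance of $\mu$ of a small segment is of strictly smaller order than its mean squared, so that a renormalized sum concentrates) or a clever use of a martingale decomposition in which the hypothesis kills all the orthogonal increments. Making this rigorous — in particular showing there is no ``extra randomness'' hidden in $\mu$ that is measurable with respect to the curve but not determined by the natural parametrization — is the delicate point, and is presumably where the argument of \cite{LawShe_NP} does its real work; our contribution in later sections will be to give an alternative route to exactly this rigidity via the quantum-time chaos.
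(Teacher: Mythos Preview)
Your proposal does not match the paper's argument, and as written it has a genuine gap. What you sketch is essentially the original Lawler--Sheffield strategy: pin down the expected measure via a martingale/Green's-function computation, then upgrade to almost-sure uniqueness. You correctly flag the second step as ``the delicate point'' and ``the main obstacle,'' but you do not carry it out; the appeals to a ``density/covering argument'' or a ``second-moment computation'' are gestures, not proofs, and the Doob--Meyer-type uniqueness in \cite{LawShe_NP} requires real work that is absent here. Your final sentence then defers this very rigidity to ``later sections \ldots\ via the quantum-time chaos,'' which is circular: that \emph{is} the paper's proof of the present theorem, so the proposal as it stands is not self-contained.

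The paper's argument is entirely different and bypasses the martingale/It\^o analysis. Given any Markovian volume measure $\mu$, couple $\eta$ with an independent $(\gamma-2/\gamma)$-wedge field $h_W$, form the chaos $:\! e^{\frac{\gamma}{2}h_W}\mu\!:$, and parametrize the unzipping so that $:\! e^{\frac{\gamma}{2}h_W}\mu\!:(\eta([0,t]))=t$; the quantum zipper $(h_W^t,\eta^t)$ is then stationary (Proposition~\ref{prop:quantstat}). The key step (Proposition~\ref{prop:bl}) is ergodic-theoretic: with $m(t)$ the Liouville boundary mass of the right side of the unzipped curve up to time $t$, stationarity makes the increments $m(t+1)-m(t)$ identically distributed, so Birkhoff's theorem gives $m(t)/t\to C(\omega)$; invariance of the wedge under additive constants (Remark~\ref{rem:invwed}) rescales $t$ and $m(t)$ by the same factor and forces $m(t)=C(\omega)t$ for \emph{all} $t>0$; a tail/germ $\sigma$-algebra argument at the origin shows $C$ is deterministic. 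Thus $:\! e^{\frac{\gamma}{2}h_W}\mu\!:$ equals $C$ times the zipped-up Liouville boundary measure. Since chaos is invertible (Proposition~\ref{prop:recovc}), $\mu$ is recovered from a formula that depends on the choice of Markovian measure only through the scalar $C$. No It\^o calculus, no SLE Green's function, and no second-moment estimate enter.
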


This result appears slightly differently in \cite{LawShe_NP}, and for more general values of $\kappa$. We give a new proof of this statement in Section \ref{sec:charac}.

\subsubsection{Natural Liouville measures are invariant under changes of coordinates.}

 We now tune $\kappa$ and $\gamma$ so that $\gamma^2=\kappa<4$. Let $\lambda$ be the Lebesgue measure on $\R$, and let $\eta$ be an SLE$_\kappa$, that comes with a $d$-dimensional volume measure $\mu$. Let $h$ be an independent Gaussian field with appropriate logarithmic correlations (recall Section \ref{sec:chaos}).

\begin{prop}\label{prop:b}
The Liouville boundary measure $:\! e^{\frac{\gamma}{2}h}\l\! :$ as well as the chaos on natural parametrization $:\! e^{\frac{\gamma}{2}h}\mu\! :$ are invariant by Liouville changes of coordinates. In particular, the following holds for times $0\leq s<t$ (see Figure \ref{fig:measures}):
\begin{eqnarray}
:\! e^{\frac{\gamma}{2}h^t}\l:_{|\phi^t_s(\R)}\circ \phi^t_s\ =\ :\! e^{\frac{\gamma}{2}h^s}\l\! :\label{eq:lebisinvariant}\\
:\! e^{\frac{\gamma}{2}h^t}\mu^t\! :\ = \ :\! e^{\frac{\gamma}{2}h^s}\mu^s\! :_{|\phi^s_t(\eta^t)}\circ \phi^s_t. \label{eq:npisinvariant}
\end{eqnarray}
\end{prop}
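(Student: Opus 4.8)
The plan is to reduce everything to the Liouville coordinate-change formula and the transformation rule for chaos under a conformal map, applied to the maps $\phi^t_s$. First I would record the key algebraic identity: for a (deterministic) conformal isomorphism $\phi\colon D\to \phi(D)$ and a Radon measure $\s$ (supported either in the bulk or on the boundary), one has
\begin{equation}\nonumber
:\! e^{\tg h_{\phi(D)}}\s\! :\ =\ \left(:\! e^{\tg (h_D + Q\log|\phi'|)}\,|\phi'|^{-\tg Q + \tg^2 n/2}\, \big(|\phi'|^{-n\tg^2/2 + d}\,\s\big)\circ\phi\right)
\end{equation}
— but more cleanly, the statement I really want is the coordinate-invariance of the \emph{pair} (quantum surface, measure): if $h_{\phi(D)} = h_D\circ\phi^{-1} + Q\log|(\phi^{-1})'|$ is the Liouville change of coordinates and $\nu_{\phi(D)} = |(\phi^{-1})'|^{d}\,\nu_D\circ\phi^{-1}$ is the $d$-dimensional pushforward of the reference measure, then $:\! e^{\frac{\gamma}{2}h_{\phi(D)}}\nu_{\phi(D)}\! :\ =\ \big(:\! e^{\frac{\gamma}{2}h_D}\nu_D\! :\big)\circ\phi^{-1}$ as a $0$-dimensional (i.e.\ honest) measure, provided $\gamma^2 = \kappa$ and $d = 1+\kappa/8$. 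The crux is that the exponent bookkeeping works out: the regularizing bump $\theta^\e_z$ (resp.\ $\tilde\theta^\e_x$) transported by $\phi$ picks up, to leading order in $\e$, a factor $|\phi'(z)|^{-2}$ (resp.\ a factor reflecting the boundary normalization), the Gaussian renormalization constant $\e^{\tg^2/2}$ (resp.\ $\e^{\tg^2}$) absorbs a power $|\phi'(z)|^{\tg^2 n/2}$, and the $Q\log|\phi'|$ term in the field contributes a deterministic Girsanov-type factor $|\phi'(z)|^{\tg Q}$. With $\tg = \gamma/2$, $n=2$ on the boundary and $n=1$ in the bulk, and $d = 1 + \gamma^2/8$, the combined power of $|\phi'(z)|$ exactly equals $d$, which is precisely the exponent in the $d$-dimensional pushforward of $\l$ (resp.\ of $\mu$). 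I would verify this exponent identity as a short computation in both cases (boundary: $\gamma Q/2 - \gamma^2/4 = 1$ when $\gamma^2=\kappa$; bulk: $\gamma Q/2 - \gamma^2/8 = 1 + \gamma^2/8 = d$).

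Next I would apply this to the unzipping maps. By definition $h^t = h\circ\phi^0_t + Q\log|{\phi^0_t}'|$ is exactly the Liouville change of coordinates associated to $\phi^0_t\colon \H\to\H\sm\eta([0,t])$, and the unzipped measures $\l^t$, $\mu^t$ are defined by the $d$-dimensional pushforward rule \eqref{eq:unzmes} (for $\l$ this is the $d$-dimensional pushforward along $\phi^t_0$; note $\l^t$ is no longer Lebesgue, but that is fine — the invariance statement \eqref{eq:lebisinvariant} is stated for the restriction to $\phi^t_s(\R)$ and composition with $\phi^t_s$, so it only compares $\l^s$ against $\l^t$, both obtained from the same $\l$). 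Since $\phi^t_s = \phi^0_s \circ (\phi^0_t)^{-1}$ relates the two unzipped pictures by a conformal map, the coordinate-invariance identity gives directly that $:\! e^{\frac{\gamma}{2}h^s}\cdot\! :$ and $:\! e^{\frac{\gamma}{2}h^t}\cdot\! :$ agree after transport by $\phi^t_s$, which is \eqref{eq:lebisinvariant} and \eqref{eq:npisinvariant}. The first sentence of the Proposition ("invariant by Liouville changes of coordinates") is then just the $s=0$, general-$\phi$ version of the same statement, plus conformal invariance of the GFF/Neumann field covariance.

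The main obstacle — and the only genuinely non-routine point — is that the chaos is defined with a \emph{fixed} family of Euclidean bump functions $\theta^\e_z$ centered at the target point, not with bumps transported by $\phi$; so to push the chaos through $\phi$ I must show that replacing $(\theta^\e_z\circ\phi^{-1})$-type transported bumps by the canonical bumps $\theta^\e_{\phi(z)}$ changes the limit only by the stated deterministic $|\phi'|$-factor, uniformly on compacts. This is the standard "the chaos does not depend on the mollifier" argument (cf.\ \cite{Ber_Chaos}): one controls the difference of the two Gaussian exponentials via the fact that the covariance kernels of $(\theta^\e_z,h)$ and of the transported bumps differ by $O_\e(1)$ with the right local log-singularity, and one uses an $L^2$ (or uniform integrability) estimate to pass to the limit. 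One must also be slightly careful near $0$ and $\infty$ where $\phi^t_s$ degenerates, but $\mu$ is supported on $\eta((0,\infty))$ and the statements are local, so a compact-exhaustion argument handles this. For the Neumann field there is the usual caveat about the additive constant: the additive constant commutes with everything (it multiplies the chaos by a deterministic random scalar, the same on both sides), so it does not affect the identities. Once the mollifier-independence is granted, the rest is the exponent computation above and unwinding definitions.
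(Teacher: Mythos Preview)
Your proposal is correct and follows essentially the same approach as the paper: a direct exponent computation from the definition of chaos together with the Liouville coordinate-change formula, with the mollifier-transport step handled exactly as you describe (the paper phrases it as showing that the difference of the two regularized field integrals is a Gaussian of variance $o(1)$). One minor clean-up: you speak of ``$\lambda^t$'' as a $d$-dimensional pushforward, but in the paper $\lambda$ at each time is simply Lebesgue on the current boundary $\R$ (no pushforward is applied to it) --- the boundary exponent identity $\tfrac{\gamma}{2}Q - \tfrac{\gamma^2}{4} = 1$ that you correctly compute is precisely the statement that the $1$-dimensional Lebesgue measure is the invariant reference.
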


The observation (\ref{eq:npisinvariant}), even though straightforward to check, is one of the main novelties of this paper. It will allow us to give a new characterization of natural parametrization by repeating an argument of \cite{QZIP} (see Section \ref{sec:charac}).

\begin{proof}Let us first prove (\ref{eq:npisinvariant}). Recall that the dimension of $\SLE$ is given by $d=1+\frac{\g^2}{8}$, and that the Liouville change of coordinates in the context of the unzipping process reads $h^t=h^s\circ \phi^s_t + Q\log |{\phi^s_t}'|$.  We have that:
\begin{eqnarray}
:\! e^{\frac{\gamma}{2}h^s}\mu^s\! :_{|\phi^s_t(\eta^t)}\circ \phi^s_t &=& \lim_{\e \rightarrow 0} e^{\frac{\g}{2}(\theta_{\phi^s_t(\cdot)}^\e,h^s)} \e^{\frac{\g^2}{8}} d\mu^s\circ \phi^s_t \nonumber\\
&=& \lim_{\e \rightarrow 0} \e^{\frac{\g^2}{8}} e^{\frac{\g}{2} (\theta_{\cdot}^{\e{\phi^t_s}'},h^s\circ \phi^s_t)} |{\phi^s_t}'|^d d\mu^t \nonumber\\
&=& \lim_{\d \rightarrow 0} (|{\phi^s_t}'|\d)^{\frac{\g^2}{8}} e^{\frac{\g}{2} (\theta_{\cdot}^{\d},h^t) - \frac{\g}{2} Q\log |{\phi^s_t}'|}|{\phi^s_t}'|^d d\mu^t  \nonumber\\
&=& \lim_{\d \rightarrow 0} |{\phi^s_t}'|^{\frac{\g^2}{8}-\frac{\g}{2} Q+d} \d^{\frac{\g^2}{8}} e^{\frac{\g}{2} (\theta_{\cdot}^{\d},h^t)} d\mu^t \nonumber\\
&=& |{\phi^s_t}'|^{\frac{\g^2}{8}-\frac{\g}{2} (\frac{\g}{2}+\frac{2}{\g})+ 1 +\frac{\g^2}{8}} :\! e^{\frac{\gamma}{2}h^t}\mu^t\! : \nonumber \\
&=& :\! e^{\frac{\gamma}{2}h^t}\mu^t\! :. \nonumber
\end{eqnarray}
Note that to go from the first to the second line, we used that
$$
(\theta_{\phi^s_t(z)}^\e,h^s)-(\theta_{z}^{\e|{\phi^t_s}'\circ\phi_t^s(z)|},h^s\circ \phi^s_t)
$$
is a Gaussian of variance $o(1)$ as $\e$ goes to $0$. This is seen in the following way: with
$$
F_\e(\cdot)=\theta_{\phi^s_t(z)}^\e(\cdot)-|{\phi^t_s}'(\cdot)|^2\theta_{z}^{\e|{\phi^t_s}'\circ\phi_t^s(z)|}(\phi^t_s(\cdot)),
$$
and $K$ being the covariance of the field, one has that
$$
\int_{\H^2} F_\e(w) K(w,y) F_\e(y){\rm d}w{\rm d}y= o_\e(1).
$$

A similar computation for the boundary Liouville measure yields
\begin{eqnarray}
:\! e^{\frac{\gamma}{2}h^t}\l\! :_{|\phi^t_s(\R)}\circ \phi^t_s = |{\phi^t_s}'|^{\frac{\g^2}{4}-\frac{\gamma}{2}Q+1} :\! e^{\frac{\gamma}{2}h^s}\l\! : = :\! e^{\frac{\gamma}{2}h^s}\l\! :. \nonumber
\end{eqnarray}
Invariance under general changes of coordinates follows from the same computations.
\end{proof}

\section{The Markovian characterization of natural parametrization}\label{sec:charac}

We now provide a new proof of the fact that the natural parametrization of SLE is characterized by its Markovian property (Theorem \ref{thm:characnp}).  The core idea (Proposition \ref{prop:bl}) is to show that the $\gamma/2$-chaos on any measure on SLE that has the correct Markovian covariance property has to be Sheffield's quantum time, i.e. the push-forward by the zipping up operation of the Liouville boundary measure.

From now on, we work in the upper half-plane $(\H,0,\infty)$, and we tune LQG and SLE parameters so that $\gamma^2=\k<4$. Moreover, we fix the law of a measure $\mu$ coupled with an SLE$_\kappa$ $\eta$ such that $\mu$ is a measure on the trace of $\eta$ that satisfies the Markovian property of Theorem \ref{thm:characnp} (we will refer to $\mu$ as a \emph{Markovian volume measure}).

Let us first introduce the quantum zipper, which we will use in this proof.

\subsection{The quantum zipper}

The \emph{quantum zipper} $(h^t_W,\eta^t)_{t\geq 0}$ is a process of pairs consisting of a distribution and a curve, that is obtained by unzipping the initial conditions: a $(\g-\frac{2}{\g})$-wedge field $h_W$, and an independent $\SLE_\k$ curve $\eta$. We choose the time parametrization of $\eta$ to correspond to the chaos $:\! e^{\frac{\gamma}{2}h_W}\mu^0\! :$, i.e. we have 
$$
:\! e^{\frac{\gamma}{2}h_W}\mu\! :\left(\eta([0,t])\right)=t.
$$

\begin{prop}[{\cite[Theorem 1.8]{QZIP}}]\label{prop:quantstat}
The quantum zipper $(h^t_W,\eta^t)_{t\geq 0}$ has stationary law, when the fields are considered up to Liouville changes of coordinates.
\end{prop}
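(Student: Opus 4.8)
The statement is due to Sheffield; the plan is to follow his argument.

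\emph{Reduction to a single time step.} It suffices to prove that, for each fixed $T\geq 0$, the pair $(h^T_W,\eta^T)$ has the same law as $(h_W,\eta)$ \emph{as a quantum surface carrying a marked curve}. Indeed, unzipping up to quantum length $T+s$ amounts to unzipping up to quantum length $T$ and then unzipping the resulting $(h^T_W,\eta^T)$ up to a further quantum length $s$; here one uses that the chaos measures used to measure quantum length are transported consistently through the intervening coordinate changes (Proposition \ref{prop:b}). Since this last operation depends only on the quantum-surface class of $(h^T_W,\eta^T)$ — unzipping by a prescribed amount of quantum length being an intrinsic operation on quantum surfaces with a marked curve, again by Proposition \ref{prop:b} — equality of the one-time laws propagates to all finite-dimensional marginals, which is precisely stationarity.

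\emph{The coupling lemma.} The content of the one-time statement is a coupling between the $\SLE_\kappa$ driving Brownian motion and the wedge field. I would work in half-plane capacity time $a$, where the unzipping maps $\phi^{(a)}$ solve a reverse Loewner equation driven by $-\sqrt{\kappa}$ times a Brownian motion and the field is carried along by the Liouville coordinate change $h^{(a)}_W=h_W\circ\phi^{(a)}+Q\log|(\phi^{(a)})'|$. Following Sheffield and Duplantier--Sheffield, one writes down the candidate conditional law of $h^{(a)}_W$ given the uncovered curve — again a suitably conditioned $\alpha$-wedge with $\alpha=\gamma-\tfrac{2}{\gamma}$ — and verifies it by an It\^o computation in $a$. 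The martingale property closes precisely because $\gamma^2=\kappa$, which balances the $Q\log|(\phi^{(a)})'|$ correction against the drift generated by the $\SLE$ term; this is the same arithmetic that makes the chaos invariant in the proof of Proposition \ref{prop:b}. For the angular part $h^m$ of the wedge field this rests on the Markov property of the free field (Proposition \ref{prop:Markovfield}) and on the local absolute continuity of the wedge with respect to the Neumann free field away from the origin; the radial part, whose law is the process $A$ of Definition \ref{def:A}, is the delicate piece — one must check that unzipping sends it to an $A$-process again, conditioned backward leg included, which is an entrance-law / Girsanov-type computation.

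\emph{From capacity to quantum length.} Half-plane capacity is not intrinsic to a quantum surface, so unzipping by a prescribed amount of capacity is not even a well-defined operation at that level; the quantum boundary length $:\! e^{\frac{\gamma}{2}h_W}\lambda\! :$, by contrast, is intrinsic and is transported correctly by the coordinate changes (Proposition \ref{prop:b}). Reading time off in quantum length absorbs the residual scaling produced by the capacity dynamics — harmless anyway at the level of quantum surfaces by Remark \ref{rem:invwed} — and yields the stated stationarity. A point one should check, so that unzipping up to a prescribed quantum length is meaningful, is that the relevant quantum length near the origin is locally finite; this reflects the inequality $\alpha=\gamma-\tfrac{2}{\gamma}<\tfrac{2}{\gamma}$, that is, the fact that the wedge is thick — exactly the regime $\kappa=\gamma^2<4$ in which we work.

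\emph{Main obstacle.} The crux is the coupling lemma: transporting the wedge field — radial part with its conditioning, and angular part — through the Loewner flow, identifying the conditional law after the coordinate change, and supplying the conformal-welding uniqueness statement that guarantees the interface obtained by welding the wedge to itself along quantum length is an $\SLE_\kappa$ and not merely some curve with the correct one-dimensional law. The remaining ingredients — the one-step reduction, the reparametrization, the invariance of the chaos measures — are comparatively soft, resting on the Markov properties (Propositions \ref{prop:markov-sle} and \ref{prop:Markovfield}) and on the coordinate-change invariance of Proposition \ref{prop:b}.
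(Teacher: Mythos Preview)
The paper does not prove this proposition; it is quoted directly from \cite[Theorem 1.8]{QZIP} and used as a black box. Your outline is a faithful high-level roadmap of Sheffield's argument --- the reduction to a one-step statement, the It\^o/coupling computation in capacity time, and the passage to an intrinsic clock --- and you correctly identify the coupling lemma (together with the conformal-welding uniqueness) as the substantive core. As written, however, it is a plan rather than a proof: the It\^o computation, the identification of the conditional law of the radial part after unzipping, and the welding uniqueness are all named but not carried out, as you yourself acknowledge in your ``Main obstacle'' paragraph.

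One small mismatch with the statement as formulated in the paper: here the quantum zipper is parametrized by the chaos $:\! e^{\frac{\gamma}{2}h_W}\mu\! :$ built on a Markovian volume measure $\mu$ on the \emph{curve}, not by the boundary Liouville length $:\! e^{\frac{\gamma}{2}h_W}\lambda\! :$ that you invoke in your third paragraph. This is harmless for the stationarity argument --- both clocks are invariant under Liouville changes of coordinates by Proposition~\ref{prop:b}, so once one has stationarity of the quantum surface under unzipping in capacity time, reparametrizing by either intrinsic clock yields a stationary process --- but you should either align your parametrization with the statement or say explicitly why the transfer is immediate.
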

\begin{rem}
Note that the time-parametrization of the quantum zipper is invariant by Liouville changes of coordinates (Proposition \ref{prop:b}).
\end{rem}

\subsection{The quantum time is a chaos on any Markovian volume measure}\label{sec:QTasC}

\begin{figure}[ht]
\centering
\includegraphics[width=12cm]{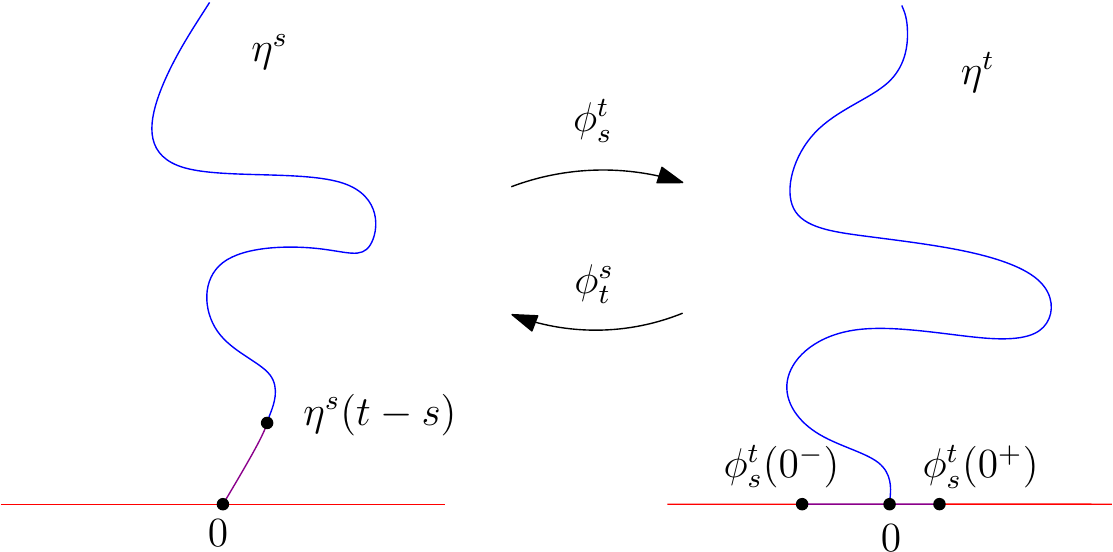}
\caption{Under the unzipping operation, the natural volume measures on the curve $\eta$ and on the boundary $\R$ are preserved: Proposition \ref{prop:b} (i) and (ii) respectively claim that the red (resp. blue) regions carry the same measures. On the purple regions, the natural measures on the curve and on the boundary coincide (Proposition \ref{prop:bl}).}
\label{fig:measures}
\end{figure}

\begin{prop}\label{prop:bl}
Let us consider the quantum zipper $(h^t_W,\eta^t)_{t\geq 0}$, and let $\mu$ be a Markovian volume measure on $\eta$. Then, for any times $0\leq s<t$:
$$
:\! e^{\frac{\gamma}{2}h_W^t}\l\! :_{|[0,\phi^t_s(0^+)]}\circ \phi^t_s = C:\! e^{\frac{\gamma}{2}h_W^s}\mu^s\! :_{|\eta^s[0,t-s]},
$$
where $C$ is a constant.
\end{prop}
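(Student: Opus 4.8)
The plan is to show that both sides are measures carried by the slit $\eta^s([0,t-s])$ which, read off in the parametrization of this slit inherited from the quantum zipper, are constant multiples of Lebesgue measure. Write $\nu_1:=\,:\!e^{\frac{\gamma}{2}h_W^t}\l\!:_{|[0,\phi^t_s(0^+)]}\circ\phi^t_s$ and $\nu_2:=\,:\!e^{\frac{\gamma}{2}h_W^s}\mu^s\!:_{|\eta^s[0,t-s]}$. Both are supported on $\eta^s([0,t-s])$ and invariant under further unzipping by Proposition~\ref{prop:b}, and a finite measure on the slit is determined by the masses it assigns to the initial sub-slits $\eta^s([0,u])$, $u\in[0,t-s]$; so it suffices to compute these. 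For $\nu_2$: by the invariance of the chaos on $\mu$, equation~\eqref{eq:npisinvariant} (used between times $0$ and $s$), the zipping map carries $:\!e^{\frac{\gamma}{2}h_W^s}\mu^s\!:$ restricted to $\eta^s([0,u])$ onto $:\!e^{\frac{\gamma}{2}h_W}\mu\!:$ restricted to $\eta([s,s+u])$, which has mass $u$ since the zipper is parametrized so that $:\!e^{\frac{\gamma}{2}h_W}\mu\!:(\eta([0,r]))=r$; hence $\nu_2(\eta^s([0,u]))=u$. For $\nu_1$: by the invariance of the Liouville boundary measure, equation~\eqref{eq:lebisinvariant}, $\nu_1(\eta^s([0,u]))$ equals the Liouville boundary length of the interval produced when $\eta^s([0,u])$ is unzipped, and a telescoping with \eqref{eq:lebisinvariant} identifies it with $\tau(s+u)-\tau(s)$, where $\tau(r):=\,:\!e^{\frac{\gamma}{2}h_W^r}\l\!:([0,\phi^r_0(0^+)])$ is the total Liouville boundary length created after $r$ units of zipper time. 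Thus the proposition is equivalent to the identity $\tau(r)=Cr$ for some constant $C$.

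The core step is therefore to show that $(\tau(r))_{r\ge 0}$ is a deterministic linear function of $r$. The process $\tau$ is continuous, non-decreasing and vanishes at $0$ (the SLE curve and the welding maps depend continuously on $r$, unzipping more only adds boundary, and the Liouville boundary measure is non-atomic in the subcritical regime $\gamma<2$). I claim it has stationary and independent increments. Stationarity of the increments follows from Proposition~\ref{prop:quantstat}: unzipping the quantum zipper up to zipper-time $s$ reproduces the initial configuration as a quantum surface, and $\tau(s+u)-\tau(s)$ is the Liouville-invariant functional ``Liouville length of the boundary interval created by unzipping the next $u$ units of zipper time'', so it is distributed as $\tau(u)$. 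Independence of the increments comes from the spatial Markov properties: conditionally on the part unzipped by zipper-time $s$, Proposition~\ref{prop:markov-sle} and Propositions~\ref{prop:Markovfield}--\ref{prop:DN} identify the law of the still-to-be-unzipped configuration with that of a fresh quantum zipper -- this is exactly the point, imported from \cite{QZIP}, for which the wedge field of Definition~\ref{def:A} (its radial drift and the conditioning at negative times) is the correct ambient field -- so that $\tau(s+u)-\tau(s)$ is, conditionally on the past, distributed as $\tau(u)$ and independent of that past. A continuous non-decreasing process with stationary independent increments started from $0$ is a deterministic drift, whence $\tau(r)=Cr$ with $C\ge 0$, and $C>0$ since $\tau$ is strictly increasing. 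Plugging back in, $\nu_1(\eta^s([0,u]))=Cu=C\,\nu_2(\eta^s([0,u]))$ for every $u$, so $\nu_1=C\nu_2$, which is the assertion.

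The main obstacle is the independence of the increments of $\tau$, i.e. the statement that, conditionally on what has been unzipped by zipper-time $s$, the configuration restarts as a genuine quantum zipper. The delicate point is that the Markov decomposition of the free field (Proposition~\ref{prop:Markovfield}) contributes a past-measurable harmonic correction to the ambient field, so the restarted configuration is not literally a fresh wedge; one must use the specific structure of the wedge field to absorb this correction into the wedge up to an additive constant and then invoke the invariance of the wedge under adding constants (Remark~\ref{rem:invwed}) together with the Liouville-invariance of $\tau$. Rather than redo this analysis I would quote it as the substance of Proposition~\ref{prop:quantstat} and its proof in \cite{QZIP}, so that the new content here is precisely the reduction of the first two paragraphs, which turns the Markovian hypothesis on $\mu$ into the identity $\tau(r)=Cr$.
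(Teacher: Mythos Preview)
Your reduction to the identity $\tau(r)=Cr$ (the paper writes $m(t)$ for your $\tau(t)$) is exactly what the paper does, and your unpacking of $\nu_2$ via \eqref{eq:npisinvariant} and of $\nu_1$ via \eqref{eq:lebisinvariant} is correct. Where you diverge from the paper is in how you prove $\tau(r)=Cr$.

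You argue via stationary \emph{independent} increments and the classification of continuous subordinators. The paper uses only stationarity: Birkhoff gives $m(t)/t\to C(\omega)$ almost surely; then a scaling trick---adding the constant $\tfrac{2}{\gamma}\log\tfrac{\tau}{T}$ to the wedge, which preserves its law by Remark~\ref{rem:invwed} and multiplies both the zipper-time and $m$ by the same factor $\tau/T$---turns this large-time asymptotic into the exact identity $m(t)=C(\omega)t$ for all $t>0$; finally $C(\omega)$ is measurable with respect to the germ $\sigma$-algebra of $(\eta,h_W)$ at the origin, which is trivial, so $C$ is deterministic.

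The gap in your route is the independence of increments. This is the assertion that, conditionally on the zipper up to time $s$, the unzipped configuration restarts as a fresh wedge zipper independent of the past. You invoke Propositions~\ref{prop:Markovfield}--\ref{prop:DN}, but those are Markov decompositions for \emph{Gaussian} free fields; the wedge of Definition~\ref{def:A} is not Gaussian (its radial part involves a Brownian motion conditioned on a singular event), so those statements do not apply as written. Proposition~\ref{prop:quantstat}, which you ultimately fall back on, asserts only stationarity of the one-time marginal as a quantum surface, not conditional independence of the future from the past. Without independence your L\'evy-process step fails: a continuous non-decreasing process with merely stationary increments can perfectly well be $X\cdot r$ for a genuinely random $X\ge 0$, and this is precisely the obstruction the paper disposes of with the germ-$\sigma$-algebra argument. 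The scaling trick is therefore not a cosmetic alternative to your approach but the device that closes the argument using only the stationarity that is actually stated and available.
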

The constant $C$ depends on the somewhat arbitrary renormalization procedure used to build chaos measures, as well as on the choice of Markovian volume measure $\mu$.

The proof of Proposition \ref{prop:bl} mimicks an argument in the proof of \cite[Theorem 1.8]{QZIP}.
\begin{proof}
Let $m(t)=:\! e^{\frac{\gamma}{2}h_W^t}\l\! :([0,\phi^t_0(0^+)])$ be the Liouville boundary mass of the part of the right-hand side $\SLE$ path that has been unzipped between times $0$ and $t$. Note that for any time $t\geq 0$, 
$$
m(t+1)=m(t)\ +:\! e^{\frac{\gamma}{2}h_W^{t+1}}\l\! :([0,\phi^{t+1}_t(0^+)]).
$$
On the other hand, by stationarity of the quantum zipper up to Liouville change of coordinates (Proposition \ref{prop:quantstat}), and by invariance of volume measures under such changes (Proposition \ref{prop:b}), the quantity $:\! e^{\frac{\gamma}{2}h_W^{t+1}}\l\! :([0,\phi^{t+1}_{t}(0^+)])$ has stationary law. By the Birkhoff ergodic theorem, the quantity $\frac{m(n)}{n}$ almost surely converges towards a random variable $C(\omega)$, for integer times $n$ going to $\infty$. The function $m(t)$ being monotone, this implies that $\frac{m(t)}{t}$ converges almost surely (and hence in probability) towards $C(\omega)$ as the time $t$ goes to $\infty$.

Let us spell out this last statement: there is a random variable $C(\omega)$ such that for any $\e>0$, we can find a deterministic time $T$, such that with probability at least $1-\e$, we have that 
$$
\sup_{t\geq T}\left|\frac{m(t)}{t} - C(\omega)\right| < \e.
$$
Let us now add a constant $\frac{2}{\g}\log\frac{\tau}{T}$ to the field $h_W^0$, where $\tau>0$ is an arbitrary small time. The law of the quantum zipper is preserved (Remark \ref{rem:invwed}), but the time scale $t$ and the quantity $m(t)$ are both scaled by $\frac{\tau}{T}$. Hence, for any $\e>0$, for any time $\tau>0$, with probability at least $1-\e$: 
$$
\sup_{t\geq \tau}\left|\frac{m(t)}{t} - C(\omega)\right| < \e.
$$
In other words, almost surely, $m(t)=C(\omega)t$ for all positive times $t$. The random constant $C(\omega)$ is then measurable with respect to the curve $\eta^0$ and the field $h^0_W$ in any neighborhood of $0$. However, the corresponding $\sigma$-algebra is trivial, and the random variable $C(\omega)$ is hence a deterministic constant.
\end{proof}

\subsection{Proof of the characterization theorem}

\begin{proof}[Proof of Theorem \ref{thm:characnp}]
Proposition \ref{prop:bl} together with Proposition \ref{prop:recovc} tells us that we can recover any Markovian volume measure $\mu$ of $\SLE$ from a wedge field. Indeed, for any positive time $T$,
\begin{eqnarray}
\mu_{|\eta^0[0,T]}&=&\lim_{\e \rightarrow 0} e^{-\frac{\g}{2}(\theta_z^\e,h^0_W)} \e^{-\frac{\g^2}{8}} :\! e^{\frac{\gamma}{2}h^0_W}\mu^0\! :_{|\eta^0[0,T]} \nonumber\\
&=& C\ \lim_{\e \rightarrow 0} e^{-\frac{\g}{2}(\theta_z^\e,h^0_W)} \e^{-\frac{\g^2}{8}}    :\! e^{\frac{\gamma}{2}h_W^T}\l\! :_{|[0,\phi^T_0(0^+)]}\circ \phi^T_0 . \nonumber
\end{eqnarray}
The last expression depends on a choice of a particular Markovian volume measure $\mu$ only through the constant $C$. In particular, all Markovian volume measures are scalar multiples of each other.
\end{proof}

\section{Existence of the natural parametrization}\label{sec:construction}

\subsection{Construction of the natural parametrization}

We now work in the upper half-plane $(\H,0,\infty)$.
Let $h^0$ be a Dirichlet Gaussian free field independent of an SLE$_\kappa$ $\eta^0$ going from $0$ to $\infty$, parametrized by capacity. Let $t>0$ be a positive time. We will explain why the chaos $:\! e^{\frac{\gamma}{2}h^t}\l\!:$ is well-defined in Proposition \ref{prop:pfD}.

\begin{defn}\label{def:np}
We consider the measure $\mu^0$ on SLE given by
$$
\mu^0_{|\eta^0[0,t]} =F(z)\ \E\left[\left.:\! e^{\frac{\gamma}{2}h^t}\l\! :\right|\eta^0\right]\circ \phi_0^t,
$$
where $\lambda$ is the Lebesgue measure on $\R$, and $F(z)= e^{-\frac{\gamma^2}{8}\hat{K}(z)}$, where $\hat{K}$ is as in Proposition \ref{prop:recove} for the field $h^0$.
\end{defn}
 Note that this definition is consistent for different values of $t$ (Proposition \ref{prop:b}). It gives a locally finite measure on $\mu^0$ (see Lemma \ref{lem:finite}).

\begin{thm}\label{thm:isnp}
The measure $\mu^0$ is (up to constant) the natural parametrization of $\SLE$.
\end{thm}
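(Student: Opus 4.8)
The plan is to verify that the measure $\mu^0$ of Definition \ref{def:np} satisfies the Markovian covariance property of Theorem \ref{thm:characnp}; uniqueness there then forces $\mu^0$ to be a deterministic multiple of the natural parametrization, which is what we want. So the goal reduces to two things: (i) checking that $\mu^0$ is a locally finite, non-degenerate measure supported on the trace of $\eta^0$ (this is Lemma \ref{lem:finite}, which we may assume, together with Proposition \ref{prop:pfD} for the well-definedness of $:\! e^{\frac{\gamma}{2}h^t}\l\!:$), and (ii) checking that conditionally on $\eta^0([0,t])$, the unzipped pair $((\eta^0)^t,(\mu^0)^t)$ has the same law as $(\eta^0,\mu^0)$.

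For step (ii) the key input is Proposition \ref{prop:b}, equations (\ref{eq:lebisinvariant}) and (\ref{eq:npisinvariant}), which say that the chaos on the Lebesgue boundary measure and the chaos on $\mu^0$ are both intertwined correctly by the unzipping maps, and Proposition \ref{prop:recove}, which says that averaging the bulk chaos over the field recovers the underlying measure up to the explicit factor $e^{-\frac{\gamma^2}{8}\widehat K(z)}$. Concretely: fix $t>0$ and a further time $u>0$, and compute $(\mu^0)^t$ restricted to $(\eta^0)^t([0,u])$. Unwinding the Definition \ref{def:np} at level $t+u$ on $\eta^0([0,t+u])$, pushing through $\phi^t_0$ via (\ref{eq:unzmes}), and using (\ref{eq:lebisinvariant}) to rewrite the Liouville boundary measure associated to the unzipped field $h^{t+u}$ in terms of the field $h^t$ living in the unzipped picture, one should land exactly on the formula defining $\mu^0$ for $(\eta^0)^t$ in its own coordinates. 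The Markov property of SLE (Proposition \ref{prop:markov-sle}) says the curve $(\eta^0)^t$ is again an SLE$_\kappa$ independent of the conditioning, and the Markov property of the Dirichlet GFF (Proposition \ref{prop:Markovfield}) — combined with the fact that the correction term $C$ there is a harmonic function, so contributes only a continuous, hence chaos-irrelevant, shift to $h^t$ — lets us replace $h^t$ by a fresh Dirichlet GFF on the unzipped domain. The factor $F(z)=e^{-\frac{\gamma^2}{8}\widehat K(z)}$ is designed precisely so that the covariance constants match under this conformal transport: one has to check that $\widehat K$ transforms the right way under the uniformizing maps, i.e. that $F$ absorbs exactly the discrepancy between the regularizations $\theta^\e_z$ at scale $\e$ before and after applying $\phi^t_0$, which is the same $o_\e(1)$-variance estimate used inside the proof of Proposition \ref{prop:b}.

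The main obstacle I expect is the bookkeeping of the normalization constants under the conformal change of coordinates — keeping track of the interplay between the scaling factor $|{\phi^t_0}'|^{-d}$ from (\ref{eq:unzmes}), the Liouville shift $Q\log|{\phi^0_t}'|$ in the field, and the renormalization factors $\e^{\gamma^2/8}$ and $F(z)=e^{-\frac{\gamma^2}{8}\widehat K(z)}$ — and verifying that everything cancels with $d=1+\gamma^2/8$ and $Q=\frac{\gamma}{2}+\frac{2}{\gamma}$, exactly as in the displayed computation proving (\ref{eq:npisinvariant}). A secondary subtlety is that Definition \ref{def:np} takes the conditional expectation $\E[\,\cdot\mid\eta^0]$ over the field: one must make sure the conditioning is compatible with the unzipping, i.e. that averaging out $h^t$ on the unzipped domain (given $(\eta^0)^t$) corresponds, after pushing back, to averaging out $h^{t}$ given $\eta^0$ on the event $\eta^0([0,t])$ fixed — this is where independence of $h^0$ and $\eta^0$ and the GFF Markov property (Proposition \ref{prop:Markovfield}) are used together. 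Once the covariance identity is in place, the conclusion is immediate from Theorem \ref{thm:characnp}, and non-degeneracy (so that the multiplicative constant is non-zero) follows from Lemma \ref{lem:finite} together with Proposition \ref{prop:recove}, which guarantees $\E[:\! e^{\frac{\gamma}{2}h^t}\l\!:]$ is a genuine non-trivial measure.
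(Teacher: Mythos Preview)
Your overall strategy---verify the Markovian covariance property and then invoke Theorem \ref{thm:characnp}---is exactly the paper's approach, and you have identified the right ingredients (Proposition \ref{prop:b}, Proposition \ref{prop:Markovfield}, the role of $F$). However, there is one genuine misstep in your outline.

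You write that the correction term $C$ coming from the GFF Markov property is ``a harmonic function, so contributes only a continuous, hence chaos-irrelevant, shift to $h^t$''. This is the point where the argument would break if taken literally. The correction $C_0$ is \emph{not} irrelevant to the chaos: adding a continuous function $m$ to the field multiplies the chaos by $e^{\frac{\gamma}{2}m}$, and here $m=C_0\circ\phi_t^0$ is a \emph{random} Gaussian field (conditional on $\eta^0$), independent of the fresh field $\tilde h^s$. What actually happens---and this is the content of the paper's Lemma \ref{lem:covar}---is that one writes
\[
:\! e^{\frac{\gamma}{2}h^t}\l\! :\;=\;e^{\frac{\gamma}{2}C_0\circ\phi_t^0}\,\bigl|{\phi_s^0}'\circ\phi_t^s\bigr|^{\frac{\gamma}{2}Q}\,:\! e^{\frac{\gamma}{2}\tilde h^t}\l\! :,
\]
and then takes the conditional expectation over $C_0$ given $\eta^0$. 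Because $C_0$ is centered Gaussian with variance $\hat K(z)-\hat K(\phi_0^s(z))+\log|{\phi_0^s}'(z)|$, one gets
\[
\E\!\left[e^{\frac{\gamma}{2}C_0(z)}\,\middle|\,\eta^0\right]=\frac{|{\phi_0^s}'(z)|^{\gamma^2/8}\,F(\phi_0^s(z))}{F(z)},
\]
and it is precisely this factor, combined with $|{\phi_s^0}'|^{\frac{\gamma}{2}Q}$, that produces the required $|{\phi_s^0}'|^{1+\kappa/8}$. So the role of $F$ is not to fix a regularization mismatch in $\theta^\e_z$, but to cancel against the exponential moment of the correction field. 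Once you rephrase this step correctly, your argument is the paper's argument.
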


\begin{proof}
Let $s>0$ be a positive time. We will first provide, given the curve $\eta^0$, a construction of the independent Dirichlet free field $h^0$. Let us consider a Dirichlet Gaussian free field $\tilde{h}^s$ independent of $\eta^0$ (this implicitly defines a zipped up field $\tilde{h}=\tilde{h}^0$). Note that the field $\tilde{h}^s\circ \phi_0^s$ is a Dirichlet Gaussian free field on the domain $\H\setminus \eta^0([0,s])$.
Hence, by Proposition \ref{prop:Markovfield}, we can define a correction field $C_0$ which, conditionally on $\eta^0$, is a Gaussian field independent of $\tilde{h}^s$ and of covariance
$$
\Cov(h^0) - \Cov(\tilde{h}^s\circ\phi_0^s).
$$
In particular, we can compute the variance of $C_0$ at a point $z$ to be
$$
\hat{K}(z) - \hat{K }(\phi_0^s(z)) + \log|{\phi_0^s}'|.
$$

Then, conditionally on $\eta^0$, the field $h^0=\tilde{h}^s\circ \phi_0^s+C_0$ is a Dirichlet Gaussian free field in $\H$. In other words, the field $h^0$ is a Dirichlet Gaussian free field in $\H$, independent of the curve $\eta^0$.

We then define a measure on the curve $\eta^s$ by
$$
\tilde{\mu}^s_{|\eta^s[0,t-s]}=F(z)\ \E\left[\left.:\! e^{\frac{\gamma}{2}\tilde{h}^t}\l\! :\right| \eta^s\right]\circ \phi_s^t.
$$
Note that the couples $(\eta^s,\tilde{\mu}^s)$ and $(\eta^0,\mu^0)$ have same law. Moreover, we will see in Lemma \ref{lem:covar} that $\tilde{\mu}^s=\mu^s$.
Hence, we can conclude thanks to the characterization of natural parametrization (Theorem \ref{thm:characnp}).
\end{proof}

\begin{lem}\label{lem:covar}
We have that $\tilde{\mu}^s=\mu^s$, i.e.
$$
\mu^0_{|\eta^0[s,\infty)}\circ \phi_s^0 = |{\phi_s^0}'|^{1+\frac{\kappa}{8}} \tilde{\mu}^s.
$$
\end{lem}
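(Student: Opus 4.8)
\textbf{Proof plan for Lemma \ref{lem:covar}.}

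The plan is to unfold both definitions of the measures in question at a common unzipping level --- say level $t$, which we are free to take larger than both $s$ and the times appearing in the localizations --- and to match them coordinate by coordinate using the Markov property of the Dirichlet free field (Proposition \ref{prop:Markovfield}) and the invariance of the Liouville boundary measure under Liouville changes of coordinates (equation (\ref{eq:lebisinvariant}) in Proposition \ref{prop:b}). Recall that $\mu^0$ is built from a Dirichlet field $h^0$ independent of $\eta^0$ via $\mu^0_{|\eta^0[0,t]} = F \, \E[:\!e^{\frac{\gamma}{2}h^t}\l\!:\,|\,\eta^0]\circ\phi_0^t$, while in the proof of Theorem \ref{thm:isnp} the \emph{same} field $h^0$ has been re-expressed as $h^0 = \tilde h^s\circ\phi_0^s + C_0$, where $\tilde h^s$ is a Dirichlet field on $\H$ independent of $\eta^0$ and $C_0$ is the (conditionally Gaussian, conditionally independent) correction field of Proposition \ref{prop:Markovfield} with explicit variance $\hat K(z) - \hat K(\phi_0^s(z)) + \log|{\phi_0^s}'|$. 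The measure $\tilde\mu^s$ is built from $\tilde h^s$ in exactly the way $\mu^0$ is built from $h^0$, but anchored at $\eta^s$ rather than $\eta^0$.

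The key steps, in order, are as follows. First, I would use the consistency of Definition \ref{def:np} across unzipping levels (noted right after the definition, via Proposition \ref{prop:b}) to write $\mu^0_{|\eta^0[s,\infty)} = F\,\E[:\!e^{\frac{\gamma}{2}h^t}\l\!:\,|\,\eta^0]\circ\phi_0^t$ with the localization of the chaos now on $\phi^t_0(\eta^0([s,\infty)))$, i.e.\ on the interval $[0,\phi^t_0(0^+)]$ that is the image of the unzipped part of the curve; similarly write $\tilde\mu^s_{|\eta^s[0,t-s]} = F\,\E[:\!e^{\frac{\gamma}{2}\tilde h^t}\l\!:\,|\,\eta^s]\circ\phi_s^t$ on the corresponding boundary interval. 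Second, I would push $\mu^0_{|\eta^0[s,\infty)}$ through $\phi_s^0$: since $\phi_s^t = \phi_0^t\circ\phi_s^0$, composing with $\phi_s^0$ replaces $\phi_0^t$ by $\phi_s^t$, and the factor $F(z) = e^{-\frac{\gamma^2}{8}\hat K(z)}$ gets replaced --- after accounting for the conformal covariance $|{\phi_s^0}'|^{1+\kappa/8} = |{\phi_s^0}'|^d$ claimed in the statement, and using $d = 1+\gamma^2/8$, $\gamma^2 = \kappa$, together with the $\hat K$-transformation rule $\hat K(z) = \hat K(\phi_s^0(z)) - \log|{\phi_s^0}'|$ that is exactly the content of the $C_0$-variance computation --- by a factor that I expect to collapse to $F(w)$ at the image point $w = \phi_s^0(z)$, so that after this manipulation I am reduced to comparing $\E[:\!e^{\frac{\gamma}{2}h^t}\l\!:\,|\,\eta^0]$ and $\E[:\!e^{\frac{\gamma}{2}\tilde h^t}\l\!:\,|\,\eta^s]$ as measures on the common boundary interval $[0,\phi_0^t(0^+)]$. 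Third --- the crux --- I would relate the two zipped-up fields: by construction $h^0 = \tilde h^s\circ\phi_0^s + C_0$, and applying the Liouville coordinate changes of the unzipping operation ($h^t = h^0\circ\phi_t^0 + Q\log|{\phi^0_t}'|$, and likewise for $\tilde h$) one gets $h^t = \tilde h^t + (\text{a harmonic-type correction coming from } C_0)$, where on the already-unzipped boundary interval the correction term is a \emph{deterministic}, $\eta^0$-measurable (hence, after conditioning, constant) drift; then the chaos $:\!e^{\frac{\gamma}{2}h^t}\l\!:$ and $:\!e^{\frac{\gamma}{2}\tilde h^t}\l\!:$ differ by $e^{\frac{\gamma}{2}(\text{correction})}$ plus the Girsanov/Cameron--Martin shift coming from the Gaussian part of $C_0$, and taking the conditional expectation $\E[\,\cdot\,|\,\eta^0]$ versus $\E[\,\cdot\,|\,\eta^s]$ absorbs exactly this discrepancy --- the mismatch in variances between $C_0$ and its $\phi$-transport being precisely the $\hat K$ terms already handled in Step 2. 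Finally I would combine the three steps to read off $\mu^0_{|\eta^0[s,\infty)}\circ\phi_s^0 = |{\phi_s^0}'|^d\,\tilde\mu^s$, i.e.\ $\tilde\mu^s = \mu^s$.

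The main obstacle is Step 3: carefully tracking how the correction field $C_0$ --- which lives on $\H$ as a Dirichlet-field correction --- behaves under the \emph{curve-dependent} conformal maps $\phi_t^0$ and interacts with the chaos regularization, and in particular checking that its contribution to $:\!e^{\frac{\gamma}{2}h^t}\l\!:$ on the unzipped boundary piece is, after taking $\E[\,\cdot\,|\,\eta^0]$, exactly the scalar $F$-ratio and conformal-derivative factors and nothing more. Concretely this means verifying that $\E[e^{\frac{\gamma}{2}(\theta^\e_w, C_0\text{-related correction})}\,|\,\eta^0]$ produces the right $e^{\frac{\gamma^2}{8}(\text{variance difference})}$ factor in the $\e\to 0$ limit, which is where the explicit variance $\hat K(z) - \hat K(\phi_0^s(z)) + \log|{\phi_0^s}'|$ of $C_0$ enters and must cancel against the $F$ factors and the Jacobian $|{\phi_s^0}'|^d$; the rest is bookkeeping with the cocycle identity $\phi_s^t = \phi_0^t\circ\phi_s^0$ and the definitions. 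A subtlety to keep an eye on is that the localization intervals on the boundary must be matched correctly under the maps (the image of $\eta^0([s,\infty))$ under $\phi_0^t$, resp.\ of $\eta^s([0,t-s])$ under $\phi_s^t$, should both be $[0,\phi_0^t(0^+)]$, up to the obvious relabeling), so that we really are comparing the same boundary set.
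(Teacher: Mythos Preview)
Your plan is correct and is essentially the paper's own proof: write $h^t = \tilde h^t + C_0\circ\phi_t^0 + Q\log|{\phi_s^0}'|\circ\phi_t^s$, pull out the conditionally-independent Gaussian factor $\E\!\left[e^{\frac{\gamma}{2}C_0}\,\middle|\,\eta^0\right]=e^{\frac{\gamma^2}{8}\Var(C_0\,|\,\eta^0)}=\frac{|{\phi_0^s}'|^{\gamma^2/8}F(\phi_0^s(z))}{F(z)}$, and collect the $F$-ratios and conformal derivatives (using $\frac{\gamma}{2}Q=1+\frac{\gamma^2}{4}$) to get $|{\phi_s^0}'|^{1+\kappa/8}$. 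One caution: the ``$\hat K$-transformation rule'' you invoke in Step~2 is not an identity for $\hat K$ --- it is precisely the output of the $C_0$-expectation in Step~3, so Steps~2 and~3 are really a single computation (as in the paper), and no Girsanov/Cameron--Martin argument is needed, only the elementary formula $\E[e^{\lambda\mathcal N}]=e^{\lambda^2\sigma^2/2}$ together with the observation that conditioning on $\eta^0$ can be replaced by conditioning on $\eta^s$ once only $\tilde h^t$ remains.
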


\begin{proof}
Note that for any time $t>s$, we have
$$
h^t=h^0\circ \phi_t^0 + Q \log|{\phi_t^0}'|=\tilde{h}^t+C_0\circ \phi_t^0 + Q\log|{\phi_s^0}'|\circ \phi_t^s.
$$

Hence we can rewrite
\begin{eqnarray}
\mu^0_{|\eta^0[0,t]}&=&F(z)\, \E\left[\left.\lim_{\e\to 0} e^{\frac{\gamma}{2}(\theta_x^\e,h^t)}\e^{\frac{\gamma^2}{4}}d\lambda \right| \eta^0\right]\circ \phi_0^t\nonumber\\
&=&F(z)\,  \E\left[\left.e^{\frac{\gamma}{2} C_0\circ \phi_t^0} |{\phi_s^0}'\circ\phi_t^s|^{\frac{\gamma}{2}Q} \lim_{\e\to 0} e^{\frac{\gamma}{2}(\theta_x^\e,\tilde{h}^t)}  \e^{\frac{\gamma^2}{4}}d\lambda \right| \eta^0\right]\circ \phi_0^t\nonumber\\
&=&F(z)\, \E\left[\left.e^{\frac{\gamma}{2}C_0\circ \phi_t^0}\right| \eta^0\right]\circ \phi_0^t \cdot |{\phi_s^0}'\circ \phi_0^s|^{\frac{\gamma}{2}Q} \cdot \E\left[\left.:\! e^{\frac{\gamma}{2}\tilde{h}^t}\l\! :_{|[0,\phi^t_0(0^+)]} \right| \eta^0\right]\circ \phi_0^t .\nonumber
\end{eqnarray}

For a centered Gaussian $\mathcal{N}$ of variance $\sigma^2$, $\E\left[e^{\lambda \mathcal{N}}\right]=e^{\frac{\lambda^2 \sigma^2}{2}}$. Hence the contribution of the correction field $C_0$ can be rewritten
\begin{eqnarray}
\E\left[\left.e^{\frac{\gamma}{2}C_0\circ \phi_t^0} \right| \eta^0\right]\circ \phi_0^t &=& \E\left[\left.e^{\frac{\gamma}{2}C_0} \right| \eta^0\right]\nonumber\\
&=& \exp\left(\frac{\gamma^2}{8}\left(\hat{K}(z) - \hat{K}(\phi_0^s(z)) + \log|{\phi_0^s}'|\right)\right)\nonumber\\
&=& \frac{|{\phi_0^s}'|^{\frac{\gamma^2}{8}}F(\phi_0^s(z))}{F(z)}.\nonumber
\end{eqnarray}

Hence, we have for any times $t>s>0$:
\begin{eqnarray}
&&\mu^0_{|\eta^0[s,t]}\circ \phi_s^0 \nonumber\\
&=& F(\phi_s^0 (z))\cdot\frac{|{\phi_0^s}'|^{\frac{\gamma^2}{8}}F(\phi_0^s(z))}{F(z)}\circ \phi_s^0 \cdot |{\phi_s^0}'|^{\frac{\gamma}{2}Q} \cdot \E\left[\left.:\! e^{\frac{\gamma}{2}\tilde{h}^t}\l\! :_{|[0,\phi^t_s(0^+)]} \right| \eta^0\right]\circ \phi_s^t \nonumber\\
&=& F(\phi_s^0 (z))\frac{|{\phi_0^s}'\circ\phi_s^0|^{\frac{\gamma^2}{8}}F(z)}{F(\phi_s^0(z))} \cdot |{\phi_s^0}'|^{1+\frac{\gamma^2}{4}} \cdot \E\left[\left.:\! e^{\frac{\gamma}{2}\tilde{h}^t}\l\! :_{|[0,\phi^t_s(0^+)]}\right| \eta^s\right]\circ \phi_s^t \nonumber\\
&=& |{\phi_s^0}'|^{1+\frac{\gamma^2}{8}} \cdot  F(z) \, \E\left[\left.:\! e^{\frac{\gamma}{2}\tilde{h}^t}\l\! :_{|[0,\phi^t_s(0^+)]} \right| \eta^s\right]\circ \phi_s^t\nonumber\\
&=& |{\phi_s^0}'|^{1+\frac{\kappa}{8}} \tilde{\mu}^s_{|\eta^s[0,t-s]}.\nonumber
\end{eqnarray}
The change of conditionning in the second equality is possible as $\tilde{h}^t$ can be obtained from $\tilde{h}^s$, an object independent of $\eta^0$, by an operation that involves the curve $\eta^0$ only via the map $\phi_s^t$, which is also a function of $\eta^s$.

\end{proof}

\subsection{The push-forward of the Dirichlet free field on the boundary}

Let $h^0$ be a Dirichlet Gaussian free field, and $\eta^0$ an independent SLE$_\kappa$.

\begin{prop}\label{prop:pfD}
The chaos $:\! e^{\frac{\gamma}{2}h^t}\l\! :$ is well-defined.
\end{prop}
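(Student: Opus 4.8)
The statement asserts that the chaos $:\! e^{\frac{\gamma}{2}h^t}\l\! :$ makes sense, where $h^t = h^0 \circ \phi_t^0 + Q\log|{\phi_t^0}'|$ is the Dirichlet free field $h^0$ transported by the unzipping map, and $\l$ is Lebesgue measure on $\R$. The subtlety is that $h^t$ is \emph{not} itself a Dirichlet (or Neumann) free field on $\H$: it is a Dirichlet free field on $\H \setminus \eta^0([0,t])$ transported to $\H$ via the conformal map $\phi_t^0$, plus the deterministic (given $\eta^0$) background charge $Q\log|{\phi_t^0}'|$. So one cannot directly invoke the standard chaos construction of Section \ref{sec:chaos}; one must check that the covariance of $h^t$ still has the right logarithmic singularity ($n=2$) along $\R$, with a continuous mean correction, and that the chaos does not degenerate for the parameter $\tilde\gamma = \gamma/2$.

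\textbf{Key steps.} First I would work conditionally on the curve $\eta^0$, so that $\phi_t^0$ and $\phi_0^t$ are fixed conformal maps. The transported field $h^0 \circ \phi_t^0$ has covariance $K_{\H\setminus\eta^0[0,t]}(\phi_0^t(\cdot),\phi_0^t(\cdot))$, the Dirichlet Green's function of the slit domain pulled back; the term $Q\log|{\phi_t^0}'|$ is a deterministic continuous function away from the two special points $\phi_t^0(\eta^0_t)$ and $\infty$. The point is that $\phi_t^0 : \H \to \H\setminus\eta^0([0,t])$ is a conformal map that extends smoothly (actually analytically) to the part of $\R$ that gets mapped to the boundary segment $[0,\phi_0^t(0^+)]$ — which is exactly the support of the relevant restriction of $\l$ — by Schwarz reflection, away from the tip $0$ which is mapped to $\eta^0_t$. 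Hence on any compact subset of $\R \setminus \{0\}$, the derivative $\phi_t^{0\,\prime}$ is bounded and bounded away from $0$, and $Q\log|{\phi_t^0}'|$ is a bona fide continuous function there. Next, writing $K_\H(z,w) = -\log|z-w| - \log|z-\bar w|$ for the (Neumann-type) kernel, the slit-domain Dirichlet Green's function differs from $K_\H$ by a harmonic function, and under the conformal pullback the logarithmic diagonal singularity is preserved: for $x$ in the interior of the boundary arc, $K_{h^t}(x, x+\delta) = -2\log|\delta| + O(1)$ as $\delta \to 0$ along $\R$, exactly the $n=2$ logarithmic correlation structure required by Definition \ref{def:chaos}. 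This is the standard computation showing that conformal maps change log-singular kernels only by a locally bounded (in fact smooth) amount plus the explicit Jacobian term that is absorbed into the background charge.

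\textbf{Finishing the argument.} Given the $n=2$ log-correlation bound with a continuous mean, Definition \ref{def:chaos} applies with $\tilde\gamma = \gamma/2 = \sqrt{\kappa}/2$, and the subcriticality condition $\tilde\gamma < \sqrt{d}$ reads $\gamma/2 < \sqrt{1+\gamma^2/8}$, i.e. $\gamma^2/4 < 1 + \gamma^2/8$, i.e. $\gamma^2 < 8$, which holds since $\gamma^2 = \kappa < 4$. So the conditional chaos $:\! e^{\frac{\gamma}{2}h^t}\l\! :$ on the arc $[0,\phi_0^t(0^+)]$ is well-defined and non-trivial. One then either invokes local absolute continuity of $h^t$ with respect to a genuine Gaussian free field on compacts of $\overline{\H}\setminus\{\eta^0_t\}$ (in the spirit of the wedge-field remark in the text), or directly checks measurability in $(\eta^0, h^0)$ jointly, to conclude that the chaos is well-defined as a random measure, not merely for a.e.\ fixed $\eta^0$. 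I expect the main obstacle to be the behavior near the tip $0 \in \R$ (mapped to $\eta^0_t$): there $\phi_t^{0\,\prime}$ degenerates like a power of the distance, so $Q\log|{\phi_t^0}'|$ has a logarithmic divergence and is no longer a continuous mean. This forces one to argue that the chaos measure assigns finite (and a.s.\ positive) mass to neighborhoods of $0$ — equivalently that the tip is not an atom and not a point of infinite mass — which is precisely the content of the subsequent Lemma \ref{lem:finite} on local finiteness, and is the place where the range $\kappa < 4$ (equivalently the SLE exponent at the tip) is used in an essential way.
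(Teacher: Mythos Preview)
Your argument has a genuine gap at its central step. You claim that $\phi_t^0$ extends analytically by Schwarz reflection across the interval $(\phi_0^t(0^-),\phi_0^t(0^+))\subset\R$, so that ${\phi_t^0}'$ is bounded and bounded away from zero there and $Q\log|{\phi_t^0}'|$ is a continuous mean. This is false. Schwarz reflection applies when a conformal map sends a boundary arc into $\R$; but $\phi_t^0$ sends this interval onto the two sides of the SLE slit $\eta^0([0,t])$, which lies in the \emph{interior} of $\H$ and is a fractal of dimension $1+\kappa/8>1$. The boundary extension of $\phi_t^0$ there is merely continuous (this is exactly the statement that SLE is generated by a curve), and ${\phi_t^0}'$ does not extend continuously. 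For the same reason your direct covariance check cannot go through as written: conditionally on $\eta^0$, the covariance of $h^t$ is the \emph{Dirichlet} kernel of $\H$ at image points, $-\log|\phi_t^0(x)-\phi_t^0(y)|+\log|\phi_t^0(x)-\overline{\phi_t^0(y)}|$, and the small-$|x-y|$ asymptotics of the first term are controlled by the irregular boundary behavior of $\phi_t^0$, not by a smooth derivative. (You also wrote the Neumann kernel and invoked the slit-domain Green's function; neither is the covariance of $h^0\circ\phi_t^0$, since $h^0$ is Dirichlet on all of $\H$.)

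The paper avoids this regularity obstacle entirely by a different route. It enlarges the probability space by an independent random harmonic function $g$ so that $\tilde{h}^0:=h^0+g+\tfrac{2}{\gamma}\log|\cdot|$ has the law of a Neumann field plus a logarithmic singularity, and then appeals to Sheffield's zipper theorem \cite[Theorem~1.2]{QZIP}: the unzipped field $\tilde{h}^t$ is again Neumann plus $\tfrac{2}{\gamma}\log|\cdot|$, so its boundary chaos is well-defined by the standard $n=2$ construction. On the open interval one has $h^t=\tilde{h}^t+m$ with $m$ built from $g\circ\phi_t^0$ and $\log|\phi_t^0|$; both are continuous there precisely because $\phi_t^0$ takes values in the \emph{bulk} of $\H$ on that interval (no boundary regularity of ${\phi_t^0}'$ is needed). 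Hence $:\!e^{\frac{\gamma}{2}h^t}\lambda\!:=e^{\frac{\gamma}{2}m}:\!e^{\frac{\gamma}{2}\tilde{h}^t}\lambda\!:$ directly from the definition. The essential input is thus the zipper result from \cite{QZIP}, which packages the needed information about $h^t$ without any smoothness of $\phi_t^0$ along $\R$.
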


\begin{proof}
From the Dirichlet free field $h^0$, we can construct a field $\tilde{h}^0=h^0+g+\frac{2}{\gamma} \log|\cdot|$ which has the law of a Neumann free field plus a singularity $\frac{2}{\gamma} \log|\cdot|$, and where $g$ is a random function, which is harmonic in $\H$ (see Proposition \ref{prop:DN}). By \cite[Theorem 1.2]{QZIP}, the field $\tilde{h}^t$ is also a Neumann free field plus a singularity $\frac{2}{\gamma} \log|\cdot|$, and so the chaos $:e^{\frac{\gamma}{2}\tilde{h}^t}\l\! :$ is well-defined. On the other hand, we see that on the open interval $(\phi_0^t(0^-),\phi_0^t(0^+))$, we can write $h^t=\tilde{h}^t+m$, where $m=g(\phi_t^0)+\frac{2}{\gamma} \log|\phi_t^0|$ is a continuous function. It follows that, on this interval, the chaos $:\! e^{\frac{\gamma}{2}h^t}\l\! :$ is well-defined and equal to $e^{\frac{\gamma}{2}m}:\! e^{\frac{\gamma}{2}\tilde{h}^t}\l\! :$, whereas on its complement,  $:\! e^{\frac{\gamma}{2}h^t}\l\! :$ is naturally the trivial zero measure.
\end{proof}

\subsection{The expectation of quantum time is finite}\label{sec:finite}

We work in the setup of Definition \ref{def:np}.

\begin{lem}\label{lem:finite}
The measure $\mu^0$ is locally finite, i.e. for any positive times $s<t$,
$$
\mu^0\left(\eta^0\left([s,t]\right)\right)<\infty
$$
almsot surely.
\end{lem}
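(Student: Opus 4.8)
The plan is to reduce local finiteness of $\mu^0$ to a statement about the expectation of a chaos measure over a compact piece of the unzipped boundary. Using the consistency in $t$ (Proposition \ref{prop:b}), we may fix $t$ large enough that $\eta^0([s,t])$ is covered by the already-unzipped part, and then work with the representative
$$
\mu^0_{|\eta^0[s,t]} = F(z)\,\E\!\left[\left.:\! e^{\frac{\gamma}{2}h^t}\l\! :\right|\eta^0\right]\circ\phi_0^t
$$
restricted to the image arc $\phi_0^t(\eta^0([s,t]))$, which is a compact subinterval $I$ of the open interval $(\phi_0^t(0^-),\phi_0^t(0^+))\subset\R$, at positive distance from its endpoints (since $\eta^0([s,t])$ stays away from the tip $\eta^0(t)$ and the origin). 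On this interval, by Proposition \ref{prop:pfD}, we have $h^t=\tilde h^t+m$ with $\tilde h^t$ a Neumann free field plus a $\frac{2}{\gamma}\log|\cdot|$ singularity and $m=g(\phi_t^0)+\frac{2}{\gamma}\log|\phi_t^0|$ a continuous function on $I$; away from the singularity at $0$ and from the log-divergence at the interval endpoints, the field $\tilde h^t$ is locally absolutely continuous with respect to the Neumann GFF on $\H$, so $:\! e^{\frac{\gamma}{2}h^t}\l\! :$ is an honest (a.s. locally finite) random measure on $I$.

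The key point is thus to bound the expectation. I would argue: $\mu^0(\eta^0([s,t]))$ is the pushforward of $F(z)\E[:\! e^{\frac{\gamma}{2}h^t}\l\! :\,|\,\eta^0](I)$, and it suffices to show this is finite almost surely (in $\eta^0$). Conditioning on $\eta^0$, one swaps expectation and the (countably additive, positive) measure via Tonelli, so that
$$
\E\!\left[\left.:\! e^{\frac{\gamma}{2}h^t}\l\! :(I)\right|\eta^0\right] = \int_I e^{\frac{\gamma}{2}m(x)}\,\E\!\left[\left.:\! e^{\frac{\gamma}{2}\tilde h^t}\l\! :(dx)\right|\eta^0\right].
$$
Now $\E[:\! e^{\frac{\gamma}{2}\tilde h^t}\l\! :(dx)]$ is, up to the deterministic factor coming from the $\frac{2}{\gamma}\log|\cdot|$ singularity of $\tilde h^t$, a multiple of the Neumann-GFF Liouville boundary measure's expectation, which by Proposition \ref{prop:recove} (boundary analogue) is $e^{\frac{\gamma^2}{4}\hat K_N(x)}\lambda(dx)$ with $\hat K_N$ locally bounded away from the relevant singular points. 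Since $I$ is a fixed compact interval (given $\eta^0$) bounded away from $0$ and from $\phi_0^t(0^{\pm})$, the integrand is bounded on $I$, $m$ is continuous hence bounded on $I$, and $F$ is a fixed continuous function, so the integral is finite. Pushing back through $\phi_t^0$ (which has bounded, non-vanishing derivative on $I$) gives $\mu^0(\eta^0([s,t]))<\infty$.

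The main obstacle is handling the interchange of $\E[\cdot\,|\,\eta^0]$ with the random measure and, relatedly, controlling the behaviour near the endpoints of $I$: one must be careful that the chaos measure $:\! e^{\frac{\gamma}{2}\tilde h^t}\l\! :$ does not put infinite mass near $\phi_0^t(0^{\pm})$, where $\tilde h^t$ develops a logarithmic blow-up inherited from the unzipping of the singularity, and near $0$ where there is the $\frac{2}{\gamma}\log|\cdot|$ singularity. Because we only integrate over the compact arc $I=\phi_0^t(\eta^0([s,t]))$, which is the image of a piece of the curve strictly between the origin and the tip and therefore stays at positive distance from all these singular points (for $\eta^0$-a.e. realization), this is not a real difficulty; the only genuinely technical input is the Fubini/Tonelli step together with the standard moment bound for the boundary chaos of a Neumann GFF on a compact set, which follows from the $\gamma/2<\sqrt d$ constraint built into Definition \ref{def:chaos}. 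I would state the needed first-moment bound for boundary chaos on compacts (citing \cite{Ber_Chaos}) and then the proof is the short computation sketched above.
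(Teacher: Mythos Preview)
Your argument has a genuine gap at the conditioning step. You write
\[
\E\!\left[\left.:\! e^{\frac{\gamma}{2}h^t}\l\! :(I)\ \right|\ \eta^0\right]
=\int_I e^{\frac{\gamma}{2}m(x)}\,\E\!\left[\left.:\! e^{\frac{\gamma}{2}\tilde h^t}\l\! :(dx)\ \right|\ \eta^0\right],
\]
but $m=g\circ\phi_t^0+\frac{2}{\gamma}\log|\phi_t^0|$ depends on the random harmonic function $g$ of Proposition~\ref{prop:DN}, which is independent of $\eta^0$ and hence \emph{not} $\sigma(\eta^0)$-measurable. You cannot pull $e^{\frac{\gamma}{2}m}$ outside the conditional expectation.

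Even if you repair this by conditioning also on $g$, the next step fails: you assert that $\E[\,:\! e^{\frac{\gamma}{2}\tilde h^t}\l\! :(dx)\mid\eta^0\,]$ is controlled by the Neumann boundary covariance. This is not justified. The statement from \cite[Theorem~1.2]{QZIP} is that the \emph{marginal} law of $\tilde h^t$ (up to a random additive constant) is that of a Neumann field plus a $\frac{2}{\gamma}\log|\cdot|$ singularity; conditionally on $\eta^0$, the field $\tilde h^t=\tilde h^0\circ\phi_t^0+Q\log|{\phi_t^0}'|$ is a conformal pushforward of a field on the slit domain and does \emph{not} have Neumann covariance on $\R$. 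So the ``first-moment bound for boundary chaos of a Neumann GFF on a compact set'' does not apply to the conditional quantity you need.

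The paper avoids both problems by bounding an \emph{unconditional} expectation over a \emph{deterministic} interval. Concretely, it encloses the relevant piece of curve in a ball of radius $R$, finds deterministic $t$ and $M$ with $\phi_0^t(\eta^0([0,\tau]))\subset[-M,M]$, and excises the random set $\mathcal I^\e=\{|\phi_t^0|\le\e\}$ to stay away from the origin. The random harmonic part $g$ is handled by Jensen's inequality (using that $g$ and $-g$ have the same conditional law), and the $\frac{2}{\gamma}\log|\phi_t^0|$ term is bounded by $\e^{-2/\gamma}$ on the excised set. Only then is the marginal Neumann law of $\tilde h^t-c$ used, together with H\"older's inequality to separate the random normalizing constant $c$ (exponential moments from a Gaussian bound) from the chaos $:\! e^{\frac{\gamma}{2}(\tilde h^t-c)}\l\! :([-M,M])$, whose $m$-th moment is finite for $1<m<4/\gamma^2$. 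This is precisely the missing idea: one must decouple the curve from the field \emph{before} invoking the Neumann structure, and the price is the random constant $c$, which then has to be controlled via H\"older rather than a direct first-moment computation.
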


\begin{proof}
We consider the first exit time $\tau$ of the ball of radius $R$ centered at $0$ by the SLE $\eta^0$. There exists a deterministic capacity time $t$ such that $t>\tau$ almost surely, and there similarly exists a deterministic interval of the real line $[-M,M]$ such that $\phi_0^t (\eta^0([0,\tau])) \subset [-M,M]$ almost surely. Let us call $\mathcal{I}^\e$ the set of points of the interval $[-M,M]$ such that $|\phi_t^0(z)|\leq \e$. Showing for any $\e$ that $\E\left[:\! e^{\frac{\gamma}{2}h^t}\l\! : ([-M,M]\setminus\mathcal{I}^\e)\right]<\infty$ will imply that the measure $\mu^0$ gives finite mass to the points of SLE that are outside of the ball of radius $\e$ around the origin and that are located before the first exit of the ball of radius $R$ around the origin. This in particular imply that $\mu^0$ is a locally finite measure.

As in the proof of Proposition \ref{prop:pfD}, we construct (thanks to Proposition \ref{prop:DN}) from the Dirichlet free field $h^0$ a field $\tilde{h}^0=h^0+g+\frac{2}{\gamma} \log|\cdot|$ which has the law of a Neumann free field plus a singularity $\frac{2}{\gamma} \log|\cdot|$. To fix the constant of the Neumann field $\tilde{h}^0$, we consider $\rho$ the indicator function of a disk of unit area centered at $(\sqrt{t}+1) i$. This ensures that the support of $\rho\circ\phi_0^t$ is included in a deterministic bounded region $\mathcal{R}$ of $\mathbb{H}$, and that the function $|{\phi_t^0}'|$ is deterministically bounded and bounded away from $0$ on the region $\mathcal{R}$ and on the support of $\rho$.

We now fix the constant of the field $\tilde{h}^0$ so that $\int \tilde{h}^0 \rho = 0$. By \cite[Theorem 1.2]{QZIP}, we see that $\tilde{h}^t-c$ has the law of $\tilde{h}^0$, where $c$ is the following random constant:

\begin{eqnarray}\label{eq:c}
c = \int \tilde{h}^t \rho = \int \tilde{h}^0  (\rho\circ\phi_0^t)  |{\phi_0^t}'|^2 - Q \int \log |{\phi_t^0}'| \ \rho   
\end{eqnarray}

Note that the first term of the right hand side is dominated by the absolute value of a multiple of a Gaussian of bounded variance, and the second term is deterministically bounded.

We can now bound
\begin{eqnarray}
\E\left[:\! e^{\frac{\gamma}{2}h^t}\l\! : ([-M,M]\setminus\mathcal{I}^\epsilon)\right] &\leq& \E\left[:\! e^{\frac{\gamma}{2}(\tilde{h}^t-\frac{2}{\gamma} \log|\phi_t^0(\cdot)|)}\l\! : ([-M,M]\setminus\mathcal{I}^\e)\right] \nonumber\\
&\leq& \e^{-2/\g}\,  \E\left[:\! e^{\frac{\gamma}{2}\tilde{h}^t}\l\! : ([-M,M]\setminus\mathcal{I}^\e)\right] \nonumber\\
&\leq& \e^{-2/\g}\,  \E\left[e^{\frac{\gamma}{2}c}:\! e^{\frac{\gamma}{2}(\tilde{h}^t-c)}\l\! : ([-M,M])\right]  < \infty. \nonumber
\end{eqnarray}

The first inequality follows from Jensen's inequality: conditionally on $\eta^0$ and $h^0$, the law of $g$ is the same as the law of $-g$. In particular, $\E\left[ \left. e^{\frac{\gamma}{2}g\circ\phi_t^0}\right|\eta^0,h^0 \right]\geq 1 $.

On the last line, we use H\"older inequality. Indeed, the random variable $e^{\frac{\gamma}{2}c}$ can be compared to the exponential of a Gaussian (recall (\ref{eq:c})) and so has finite moments of all order. And $:\! e^{\frac{\gamma}{2}(\tilde{h}^t-c)}\l\! : ([-M,M])$ has the law of $:e^{\frac{\gamma}{2}\tilde{h}^0}\l\! : ([-M,M])$, which has finite moments of order $m$ larger than $1$: its moments are finite for all $0<m<4/\gamma^2$ (\cite[Theorem 2.11]{RhVa_review}).

\end{proof}

\section{Other volume measures on SLE curves}\label{sec:other}

One could similarly study the natural parametrization of SLE$_{\kappa'}$ for $\kappa'\in (4,8)$, as well as the volume measure on boundary touching points of a SLE$_\kappa$($\rho$) for the range of parameters $\kappa \in (0,4)$ and $\rho \in (-2, \kappa/2 -2)$.

In both these cases, quantum times can be defined as the Poisson clock of the process of disks disconnected from $\infty$ by the curve (see \cite[Definition 7.2]{MatingTrees} and \cite[Definition 7.14]{MatingTrees} respectively).

In the first case, the quantum time on SLE$_{\kappa'}$ should correspond to the $2/\gamma$-chaos on the natural parametrization of SLE (where $\gamma=4/\sqrt{\kappa'}$ through the duality relationship $\kappa=16/\kappa'$). In the second case, the quantum time on boundary touching points of an SLE$_\kappa$($\rho$) (which is a set of almost sure Hausdorff dimension $\frac{(\k-4-2\rho)(\rho+4)}{2\k}$ \cite{MiWu_DimSLE}) should be a $\hat{\gamma}$-chaos on the Minkowski content, where $\hat{\gamma}=\frac{\gamma}{2}(1-\frac{2\r+4}{\gamma^2})$, and $\gamma=\sqrt{\kappa}$.

The correct parameters for the chaos measures are determined by the invariance of these measures under change of coordinates, as in Proposition \ref{prop:b}.

\bibliography{biblio}{}
\bibliographystyle{plain}

\end{document}